\newcommand{\bbold}{\mathbb}
\def \O{\mathcal{O}}
\def\HLO{\Upl\Upo}
\def\R { {\bbold R} }
\def\Q { {\bbold Q} }
\def\Z { {\bbold Z} }
\def\T { {\bbold T} }
\def \I{\operatorname{I}}
\renewcommand\epsilon{\varepsilon}
\def \d{\operatorname{d}}
\def \<{\langle}
\def \>{\rangle}
\def \dv{\operatorname{dval}}
\def \hat {\widehat}
\def \supp {\operatorname{supp}}
\def \((  {(\!(}
\def \)) {)\!)}
\def \res{\operatorname{res}}
\def \k {{{\boldsymbol{k}}}}
\DeclareMathSymbol{\precequ}{\mathrel}{symbols}{"16}
\DeclareMathSymbol{\succequ}{\mathrel}{symbols}{"17}
\def \nasymp{\not\asymp}
\newcommand{\claim}[2][\!\!]{\medskip\noindent {\sc Claim #1:} {\it #2}\medskip}
\newcommand{\subcase}[2][\!\!]{\medskip\noindent {\sc Subcase #1:} {\it #2}\/}
\newtheorem{theorem}{Theorem}[section]
\newtheorem{lemma}[theorem]{Lemma}
\newtheorem{prop}[theorem]{Proposition}
\newtheorem{cor}[theorem]{Corollary}
\newtheorem*{theoremA}{Theorem A}
\newtheorem*{corB}{Corollary B}
\newtheorem*{corC}{Corollary C}
\theoremstyle{definition}
\theoremstyle{remark}
\newtheorem*{example}{Example}
\newtheorem{remarkNumbered}[theorem]{Remark}
\newcommand{\abs}[1]{\lvert#1\rvert}
\def \dv{\operatorname{dv}}
\let\oldi\i
\let\oldj\j
\renewcommand\i{\relax\ifmmode{\boldsymbol{i}}\else\oldi\fi}
\renewcommand\j{\relax\ifmmode{\boldsymbol{j}}\else\oldj\fi}
\renewcommand\leq{\leqslant}
\renewcommand\geq{\geqslant}
\renewcommand\le{\leq}
\renewcommand\ge{\geq}
\DeclareMathAlphabet{\mathbf}{OML}{cmm}{b}{it}
\DeclareFontFamily{U}{fsy}{}
\DeclareFontShape{U}{fsy}{m}{n}{<->s*[.9]psyr}{}
\DeclareSymbolFont{der@m}{U}{fsy}{m}{n}
\DeclareMathSymbol{\der}{\mathord}{der@m}{182}
\DeclareSymbolFont{der@m}{U}{fsy}{m}{n}
\DeclareMathSymbol{\derdelta}{\mathord}{der@m}{100}
\DeclareSymbolFont{imag@m}{OT1}{cmr}{m}{ui}
\DeclareMathSymbol{\imag}{\mathord}{imag@m}{105}
\DeclareFontFamily{OMS}{smallo}{}
\DeclareFontShape{OMS}{smallo}{m}{n}{<->s*[.65]cmsy10}{}
\DeclareSymbolFont{smallo@m}{OMS}{smallo}{m}{n}
\DeclareMathSymbol{\smallo}{\mathord}{smallo@m}{79}
\DeclareFontFamily{OMS}{largerdot}{}
\DeclareFontShape{OMS}{largerdot}{m}{n}{<->s*[.8]cmsy10}{}
\DeclareSymbolFont{largerdot@m}{OMS}{largerdot}{m}{n}
\DeclareMathSymbol{\largerdot}{\mathord}{largerdot@m}{15}
\DeclareMathSymbol{\llambda}{\mathord}{der@m}{108}
\DeclareMathSymbol{\rrho}{\mathord}{der@m}{114}
\def \Upg{\Upgamma}
\def \upl{\uplambda}
\def \Upl{\Uplambda}
\def \upo{\upomega}
\def \Upo{\Upomega}
\newcommand{\equationqed}[1]{\[\pushQED{\qed}#1 \qedhere\popQED\]\let\qed\relax}
\newcommand{\alignqed}[1]{\begin{align*}\pushQED{\qed} #1 \qedhere\popQED\end{align*}\let\qed\relax}
\def \No{\text{{\bf No}}}
\begin{document}
\title{Short Hardy Fields}

\author[Aschenbrenner]{Matthias Aschenbrenner}
\address{Kurt G\"odel Research Center for Mathematical Logic\\
Universit\"at Wien\\
1090 Wien\\ Austria}
\email{matthias.aschenbrenner@univie.ac.at}

\author[van den Dries]{Lou van den Dries}
\address{Department of Mathematics\\
University of Illinois at Urbana-Cham\-paign\\
Urbana, IL 61801\\
U.S.A.}
\email{vddries@illinois.edu}

\date{July 2025.}

\begin{abstract} Differentially algebraic Hardy field extensions of short Hardy fields  are short.
This is proved in the more general setting of $H$-fields. As an application we extend a theorem of
Rosenlicht (1981) by showing that each short asymptotic couple of Hardy type with small derivation  is isomorphic to the asymptotic couple of an analytic Hardy field.
\end{abstract}

\maketitle

\section*{Introduction} 

\noindent
An ordered set---here and below ``ordered'' means ``totally ordered''--- is said to be {\em short\/} if each ordered
subset of it has countable cofinality and countable coinitiality. {\em Example}\/: the real line. Shortness is a rather robust property, and \cite[Section~5]{AD} considers this property for Hardy fields (which are naturally ordered fields) and the ordered differential field $\T$ of transseries. In fact, $\T$ is short \cite[Corollary~5.20]{AD}. Left open in \cite{AD} is whether every differentially algebraic Hardy field extension of a short Hardy field is short. Here we give an affirmative answer. We actually prove something more general for $H$-fields. 

An {\em $H$-field\/}  is by definition an ordered field $H$ with a derivation $h\mapsto h'$ on it that interacts with the ordering as follows: for the constant field $C$ of $H$ and the convex
subring $\O:=\big\{h\in H:\text{$|h|\le c$ for some $c\in C$}\big\}$ of $H$ we have: \begin{enumerate}
\item[(H1)] for all $h\in H$, if $h>C$, then $h'>0$;
\item[(H2)] $\O=C+\smallo$, where $\smallo$ is the maximal ideal of the valuation ring $\O$ of $H$.
\end{enumerate} 
Hardy fields that contain $\R$ as a subfield are $H$-fields with constant field $\R$, as is $\T$. 

\begin{theoremA}\label{main}  If $E$ is a differentially algebraic $H$-field extension of a short $H$-field and the constant field of $E$ is short, then $E$ is short. 
\end{theoremA}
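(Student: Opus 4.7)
The plan is to analyze the extension $H\subseteq E$ through the associated valuation and reduce shortness of $E$ to shortness of its value group $\Gamma_E$ together with shortness of its constant field $C_E$. For any valued ordered field $K$ with valuation $v$ and value group $\Gamma_K$, every cut in $K$ is of one of three types: cuts which project to nontrivial cuts of $\Gamma_K$ via $v$; cuts lying over a single point of $\Gamma_K$ and refining to a cut of the residue structure (for an $H$-field with small derivation, ultimately a cut of $C_K$); and immediate cuts, i.e., pseudo-Cauchy cuts. So the first step is to make this precise in the $H$-field setting and reduce shortness of $E$ to three things: (a) shortness of $\Gamma_E$ as an ordered abelian group, (b) shortness of $C_E$ (given by hypothesis), and (c) absence of pseudo-Cauchy-type cuts in $E\setminus H$ of uncountable cofinality.

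For (c), suppose $(a_\lambda)_{\lambda<\kappa}$ is a pseudo-Cauchy sequence in $H$ of uncountable cofinality $\kappa$ acquiring a pseudo-limit $y\in E\setminus H$. Then $\bigl(v(y-a_\lambda)\bigr)_\lambda$ is strictly increasing of order type $\kappa$ inside $\Gamma_H$, contradicting shortness of $H$ (hence of $\Gamma_H$). Thus no such pseudo-limits appear in $E$. An analogous argument, using that $C_E$ is short, rules out cofinal or coinitial $\omega_1$-like sequences in ``residue cuts'' above a fixed value. After this preparation the theorem is reduced to (a).

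For (a), the key input is a theorem about asymptotic couples of Hardy type: a d-algebraic extension of a short asymptotic couple is short. I would prove this by decomposing $E/H$ into a transfinite tower of elementary d-algebraic steps---algebraic extensions, extensions by a single d-algebraic generator, and immediate extensions---and verifying that shortness of the underlying asymptotic couple $(\Gamma,\psi)$ is preserved at each step. Algebraic extensions leave $\Gamma$ unchanged up to finite index. A single d-algebraic generator $y$ over a short $H$-field contributes at most countably much ``fresh'' data to $\Gamma$ in any given convex piece, governed by the Newton-diagram structure of the minimal annihilator of $y$, so shortness of the ordered group is preserved. Immediate steps add nothing to $\Gamma$. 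Transfinite limits in the tower are controlled by the observation that a countable increasing union of short ordered sets is short: given a subset, its cofinality and coinitiality are already witnessed inside some countable piece of the union.

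The main obstacle I anticipate is step (a), namely controlling the asymptotic couple in a single d-algebraic step. The value group of such an extension can be infinitely generated and can produce nontrivial accumulation in the $\psi$-image, so shortness must be verified not only for $\Gamma_E$ as an ordered abelian group but for the full asymptotic couple structure, including the behavior of $\psi$ near critical cuts. This demands a fine analysis of the convex structure of asymptotic couples of Hardy type---precisely the theory foreshadowed by the paper's application to Rosenlicht's theorem. Once that analysis is in hand, combining (a)--(c) and passing to the full extension via the tower decomposition yields Theorem~A.
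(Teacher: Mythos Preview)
Your reduction to shortness of $\Gamma_E$ is sound (for an $H$-field the ordered residue field is isomorphic to the constant field, so Lemma~\ref{shorto} gives shortness of $E$ from shortness of~$\Gamma_E$ and of $C_E$; your part (c) is therefore redundant, and its formulation is also off---it concerns pseudo-limits in $E$ of sequences from $H$, not arbitrary subsets of $E$). The genuine gap is in~(a). You plan to write $E/H$ as a transfinite tower of simple $\d$-algebraic steps, note that each single generator adds only finite rational rank to $\Gamma$, and pass to limits using that a \emph{countable} increasing union of short ordered sets is short. That union fact is correct, but the tower cannot in general be made countable: a $\d$-algebraic $H$-field extension can introduce uncountably many new archimedean classes into the value group. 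Concretely, in what the paper calls the step $H\mapsto H^*$ one adjoins $f_i$ with $f_i^\dagger=s_i\in H$ for each $s_i$ with $\{v(s_i-h^\dagger):h\in H^\times\}\subseteq\Psi^{\downarrow}$, obtaining $\Gamma^*=\Gamma\oplus\bigoplus_{i\in I}\Q\,vf_i$ with $I$ possibly uncountable. At a limit stage of uncountable cofinality your union argument simply fails, and no per-generator Newton-diagram bookkeeping repairs it.

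The paper replaces the step-by-step count with a single global embedding. The assignment
\[
[\gamma^*]\ \longmapsto\ \big(a f_{i_1}^{k_1}\cdots f_{i_n}^{k_n}\big)^\dagger\ =\ a^\dagger + k_1 f_{i_1}^\dagger + \cdots + k_n f_{i_n}^\dagger\ \in\ H^{>},
\]
for any representative $af_{i_1}^{k_1}\cdots f_{i_n}^{k_n}\succ 1$ of the class $[\gamma^*]$, is strictly increasing from $[(\Gamma^*)^{\ne}]$ into $H^{>}$ by the basic inequality for $\psi$ in $H$-asymptotic couples. Since $H$ is short, so is $[(\Gamma^*)^{\ne}]$, hence $\Gamma^*$, regardless of the size of $I$. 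This logarithmic-derivative embedding is the idea your proposal is missing. With it in hand, the paper alternates an $\omega$-iterate of $H\mapsto H^*$ with immediate newtonization \emph{countably} many times to build a short Newton--Liouville closure, and then embeds $E$ into it after a routine reduction (your proposal also omits) to the $\upo$-free real closed case.
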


\noindent
This answers the above question from \cite{AD} for Hardy fields containing $\R$. Let $H$ be any short Hardy field and $E$ a differentially algebraic Hardy field extension of $H$. Then the Hardy field $H(\R)$ is short, by Lemma~\ref{hfs} below,
and $E(\R)$ is a differentially algebraic Hardy field extension of $H(\R)$, so $E(\R)$ and thus $E$ are short. This answers the question for all Hardy fields.

We use Theorem~A to realize short $H$-fields and short asymptotic couples in the realm of Hardy fields.
Some terminology:
A Hardy field is said to be {\it analytic}\/ if each element of it has an analytic representative~${(a,+\infty)\to\R}$ ($a\in\R$). An analytic Hardy field not contained in any strictly larger analytic Hardy field is called {\it maximal.}\/ (By Zorn,
each analytic Hardy field extends to a maximal one. Every maximal analytic Hardy field contains $\R$.) A valued differential field $K$ is said to have {\it small derivation}\/ if $\smallo'\subseteq \smallo$, where $\smallo$ is the maximal ideal of the valuation ring of $K$. 
  Hardy fields have small derivation, as has $\T$.
By \cite[Corollary~7.9]{AD}, $\T$ is isomorphic to an analytic Hardy field.
We improve this as follows:

\begin{corB}
Every short $H$-field  with small derivation and archimedean constant field embeds into every maximal analytic Hardy field.
\end{corB}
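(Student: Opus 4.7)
The plan is to build the embedding $K\hookrightarrow\mathcal{H}$ by a Zorn-type argument, using Theorem~A to keep the intermediate $H$-subfields short.

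First, I would reduce to the case $C=\R$. Since $C$ is archimedean, fix an order embedding $C\hookrightarrow\R$, and inside a suitable $H$-field extension form the compositum $K\cdot\R$. An $H$-field analog of Lemma~\ref{hfs} yields that $K\cdot\R$ is a short $H$-field; it still has small derivation, and its constant field is $\R$. Thus it suffices to embed $K\cdot\R$ into $\mathcal{H}$ over $\R$, so from now on assume $C=\R\subseteq K$.

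Let $\mathcal{P}$ be the poset of $H$-field embeddings $\iota\colon E\to\mathcal{H}$ with $E$ an $H$-subfield of $K$ containing $\R$ and $\iota$ the identity on $\R$, ordered by extension. The inclusion $\R\hookrightarrow\mathcal{H}$ belongs to $\mathcal{P}$, and $\mathcal{P}$ is closed under chain unions, so Zorn yields a maximal element $(E,\iota)$. I would show $E=K$; if not, take $f\in K\setminus E$ and produce $f^{*}\in\mathcal{H}$ so that $f\mapsto f^{*}$ extends $\iota$ to an embedding $E\langle f\rangle\to\mathcal{H}$. When $f$ is d-transcendental over $E$, maximality of $\mathcal{H}$ within analytic Hardy fields forces $\mathcal{H}$ to realize the cut of $f$ over $\iota(E)$ by a d-transcendental element (otherwise one could construct a proper analytic Hardy-field extension of $\mathcal{H}$ realizing that cut). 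When $f$ is d-algebraic over $E$, take a minimal annihilator $P(Y)\in E\{Y\}$, transport to $P^{\iota}\in\iota(E)\{Y\}$, and find a zero $f^{*}\in\mathcal{H}$ of $P^{\iota}$ with matching sign data; maximality of $\mathcal{H}$ again supplies this zero. Either way the extended embedding contradicts maximality of $(E,\iota)$.

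The main obstacle will be the d-algebraic realization step: matching the $H$-field type of $f$ over $E$ to an actually-realized type in $\mathcal{H}$ over $\iota(E)$. Maximality of $\mathcal{H}$ among analytic Hardy fields supplies substantial closure, but the correspondence with the type at $f$ requires $\iota(E)$ be sufficiently ``nice''---e.g.\ closed under standard Hardy-field operations, or $\omega$-free---before the known realization theorems apply. Theorem~A is essential here: at every stage of the induction the $H$-subfield built up on the Hardy-field side must remain short, and this is exactly what Theorem~A guarantees; otherwise one would be forced to appeal to $\omega_{1}$-saturation of $\mathcal{H}$, which is not something maximality gives for free.
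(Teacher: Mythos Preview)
Your Zorn-by-elements strategy has a genuine gap, and it is not how the paper proceeds. In your inductive step you need, for each $f\in K\setminus E$, an $f^{*}\in\mathcal H$ realizing the \emph{full differential $H$-field type} of $f$ over $E$, not merely its order cut or a single annihilator. For $f$ $\d$-transcendental, the fact that $\mathcal H$ is $\eta_1$ (together with shortness of $E$) only gives a realizer of the order cut; nothing prevents that realizer from being $\d$-algebraic over $\iota(E)$, and even a $\d$-transcendental realizer need not satisfy $v\big(P^{\iota}(f^{*})\big)=v\big(P(f)\big)$ for all $P\in E\{Y\}$, which is what an embedding $E\langle f\rangle\to\mathcal H$ demands. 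For $f$ $\d$-algebraic, ``a zero of $P^{\iota}$ with matching sign data'' is far from sufficient: two zeros of the same minimal differential polynomial over $E$ can generate non-isomorphic $H$-field extensions. The known extension lemmas in [ADH] that do control types require strong hypotheses on the base (e.g.\ $\upo$-free, newtonian), which a generic intermediate $E$ will not satisfy. Your remark that Theorem~A is needed ``to keep the $H$-subfield built up on the Hardy-field side short'' is also misplaced: every $E\subseteq K$ is already short, so Theorem~A does no work in your scheme as written.

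The paper avoids all of this by applying Theorem~A \emph{once, globally}, rather than locally inside a Zorn argument. Starting from the given short $H$-field (after passing to $\operatorname{H}(K)^{\operatorname{rc}}$), it takes a Newton--Liouville closure: this is $\d$-algebraic with the same constant field, hence still short by Theorem~A, and it is a \emph{closed} $H$-field. Embedding a closed short $H$-field into a closed $\eta_1$-ordered $H$-field with small derivation and constant field $\R$ is then the content of the already-established \cite[Lemma~7.8]{AD}, where the model completeness of the theory of closed $H$-fields does the heavy lifting that your elementwise extension cannot. So the role of Theorem~A is not to maintain shortness along a chain, but to ensure that passing to a closed extension---where the embedding problem becomes tractable---does not destroy shortness.
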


\noindent
In statements like these, embeddings (and isomorphisms as a special case) are valued field embeddings that respect the ordering and the derivation. 
As an application of Corollary~B and its proof we show that the $H$-field $\No(\omega_1)$ with the derivation from \cite{BM} embeds into every maximal analytic Hardy field.

\medskip\noindent
Let $H$ be an $H$-field, and let $h\mapsto vh\colon H^\times\to\Gamma=v(H^\times)$ be the valuation on $H$ with valuation ring $\mathcal O$ as above.
Then the logarithmic derivative map 
$$h\mapsto h^\dagger:=h'/h\ \colon\  H^\times\to H$$ 
descends to $\Gamma$:
there is a map $\psi\colon \Gamma^{\ne}=\Gamma\setminus\{0\}\to\Gamma$ such that $\psi(vh)=v(h^\dagger)$ for all $h\in H^\times$ with $vh\ne 0$, and such that for all $\alpha,\beta\in\Gamma^{\ne}$:
\begin{itemize}
\item[(A1)] $\alpha+\beta\ne 0\Rightarrow \psi(\alpha+\beta)\ge \min\big(\psi(\alpha),\psi(\beta)\big)$;
\item[(A2)] $\psi(k\alpha)=\psi(\alpha)$ for $k\in\Z\setminus\{0\}$;
\item[(A3)] $\alpha>0\Rightarrow\alpha+\psi(\alpha)>\psi(\beta)$.
\end{itemize}
The pair $(\Gamma,\psi)$ is called the {\it asymptotic couple}\/ of $H$. Any pair $(\Gamma,\psi)$
where $\Gamma$ is an ordered abelian group and $\psi\colon\Gamma^{\ne}\to\Gamma$ satisfies (A1)--(A3) for all $\alpha,\beta\in\Gamma^{\ne}$
is called an {\it asymptotic couple.}\/ Such an asymptotic couple $(\Gamma,\psi)$ is said to be {\it of Hardy type}\/ if 
for all $\alpha,\beta\in\Gamma^{\ne}$:   $\psi(\alpha)>\psi(\beta)$ iff $n\abs{\alpha}<\abs{\beta}$ for all $n$,
 to have {\it small derivation}\/ if for all $\alpha>0$ in $\Gamma$ we have  $\alpha+\psi(\alpha)>0$, and to be {\it short}\/
  if the ordered set $\Gamma$ is short.
The asymptotic couple of any Hardy field is of Hardy type with small derivation (as is that of $\T$).
Rosenlicht \cite[Theorem~3 and Remark~3 following it]{Rosenlicht81} showed conversely that
every asymptotic couple $(\Gamma,\psi)$ of Hardy type with small derivation and $\Gamma$ of finite archimedean rank 
is isomorphic to the asymptotic couple of an analytic Hardy field   containing~$\R$.  Using Corollary~B  we generalize this result\endnote{As a consequence of Corollary~C,
each asymptotic couple of Hardy type with small derivation and   countable rank is isomorphic to the asymptotic couple of a Hardy field extending~$\R$. A different proof of this is given in the forthcoming master's thesis of Clemens Kinn (Univ.~Vienna).}:

\begin{corC}
Let $M$ be a maximal analytic Hardy field. Then any 
short asymptotic couple of Hardy type with small derivation is isomorphic to the asymptotic couple of a spherically complete Hardy subfield of $M$ containing $\R$.
\end{corC}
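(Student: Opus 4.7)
The plan is to realise the given short asymptotic couple $(\Gamma,\psi)$ on a suitable spherically complete short $H$-field over $\R$ and then invoke Corollary~B to embed it into $M$. This reduces the task to constructing such an $H$-field and verifying its shortness.

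First I would construct a spherically complete $H$-field $H\supseteq\R$ with constant field $\R$, small derivation, and asymptotic couple isomorphic to $(\Gamma,\psi)$. The natural candidate is the Hahn series field $\R[[t^{\Gamma}]]$, with value group $\Gamma$ and residue field $\R$, equipped with a strongly $\R$-linear derivation $\partial$ whose action on monomials satisfies $v((t^\gamma)^\dagger)=\psi(\gamma)$ for $\gamma\in\Gamma^{\ne}$. Defining $\partial$ amounts to choosing a compatible system of logarithmic derivatives of monomials---additive in $\gamma$, as the Leibniz rule forces the map $\gamma\mapsto (t^\gamma)^\dagger$ to be $\Z$-linear, and of prescribed valuation $\psi(\gamma)$. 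The assumptions that $(\Gamma,\psi)$ is of Hardy type and has small derivation should guarantee that such a system exists, that the resulting $\partial$ satisfies (H1) and (H2), and that $H$ has small derivation; as a Hahn series field, $H$ is automatically spherically complete.

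Next I would verify shortness of $H$. Since both $\Gamma$ and $\R$ are short, one analyses ordered subsets of $H$ via leading monomials and leading coefficients, following the pattern of the proof that $\T$ is short in \cite[Cor.~5.20]{AD}. Corollary~B then yields an embedding $\iota\colon H\hookrightarrow M$; its image $\iota(H)$ is a Hardy subfield of $M$ containing $\R$, is spherically complete (being valued-field isomorphic to $H$), and has asymptotic couple isomorphic to $(\Gamma,\psi)$, since valued differential field embeddings preserve the asymptotic couple.

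The main obstacle lies in the construction of $H$: the compatibility of the $\Z$-linearity demanded by the Leibniz rule with the valuation constraint $v((t^\gamma)^\dagger)=\psi(\gamma)$ and with (A1) is delicate, and one must check carefully that a strongly linear derivation built this way produces the axioms (H1), (H2) and an asymptotic couple equal to exactly $(\Gamma,\psi)$. Adapting the transseries shortness argument of \cite{AD} to general Hahn-series $H$-fields over short value groups is the other main piece of work.
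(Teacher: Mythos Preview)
Your high-level strategy---realise $(\Gamma,\psi)$ as the asymptotic couple of a spherically complete Hahn field $K=\R(\!(t^\Gamma)\!)$, verify that $K$ is short, and apply Corollary~B---matches the paper's exactly. But the paper resolves both of your ``obstacles'' more simply than you anticipate.

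For shortness, there is no need to imitate the transseries argument. Since $K$ has convex valuation with archimedean residue field $\R$ and short value group $\Gamma$, it is short by \cite[Lemma~5.19]{AD} (restated here as Lemma~\ref{lem:5.19}). That is one line.

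For the derivation, the paper does not try to build it directly on $\R(\!(t^\Gamma)\!)$. Instead it first embeds $\Gamma$, via the Hahn Embedding Theorem, into the Hahn product $\Gamma^*:=H[I,\R]$ with $I=[\Gamma^{\ne}]$ reversed, so that $[\Gamma]=[\Gamma^*]$. Because the archimedean classes agree and $(\Gamma,\psi)$ is of Hardy type, $\psi$ extends uniquely to $\psi^*$ on $\Gamma^*$, giving an $H$-couple over~$\R$ of Hahn type. On $K^*=\R(\!(t^{\Gamma^*})\!)$ the derivation is then \emph{canonical}: writing $\alpha=\sum_i \alpha_i e_i$ in the Hahn basis $(e_i)$, one sets $(t^\alpha)'=-\sum_i \alpha_i\, t^{\alpha+\psi(e_i)}$ and extends strongly $\R$-linearly (Lemmas~\ref{lem:summable}--\ref{lem:der}). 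Since $\Psi\subseteq\Gamma$, the subfield $\R(\!(t^\Gamma)\!)$ is closed under this derivation and inherits asymptotic couple $(\Gamma,\psi)$. The passage through $\Gamma^*$ is precisely what avoids the delicate compatibility problem you flagged: the Hahn basis furnishes the ``additive system of logarithmic derivatives'' for free.
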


\noindent
In the rest of this paper we freely use notation and terminology from [ADH] (and its results!). For a summary of relevant material from
[ADH], see   {\it Concepts and Results from \textup{[ADH]}}\/ in the introduction to~\cite{ADHnorm}\endnote{For a list of errata to [ADH],  see \cite{ADHnorm} or    \url{https://tinyurl.com/ADH-errata}.}.
We also refer to various basic facts on shortness from \cite{AD}. As in~\cite{AD} we call an $H$-field {\em closed\/} if it is $\upo$-free, Liouville closed, and newtonian. 

\subsection*{Organization of the paper}
Section~\ref{sec:sof} contains preliminary observations about short ordered fields.
The key point in the proof of Theorem~A is the discussion
 in Section~\ref{sec:altnlc}
to the effect that a certain construction $H\mapsto~H^*$ for real closed $\upo$-free $H$-fields $H$ preserves shortness. This is combined with a more economical way of generating the Newton-Liouville closure of an $\upo$-free $H$-field than in [ADH].
We also need some more routine lemmas to reduce to the case of a real closed $\upo$-free $H$-field. These lemmas are in Section~\ref{sec:proof}, where we complete the proof of Theorem~A and obtain Corollary~B.
In Section~\ref{sec:asc} we prove Corollary~C; this requires Corollary~B and an extension of a
construction from \cite[Section~11]{ADlc}. 

\section{Short Ordered Fields}\label{sec:sof}

\noindent
This section we make two observations on short ordered fields, Lemmas~\ref{shorto} and~\ref{kl}, to be used in the next two sections.
First a reminder about composing valuations.

\subsection*{Composing valuations}  Let
$K$ be a field and $\O_K$ be a valuation ring of $K$, and let~$\pi_K\colon \O_K\to R$ be the residue map onto its residue field $R$. Let also
$\O_R$ be a valuation ring of~$R$. Then $\O:=\pi_K^{-1}(\O_R)$ is a subring of $\O_K$. 

\claim{$\O$  is a valuation ring of $K$.}

\noindent
First, if $a\in K$ and $a\notin \O_K$, then $a^{-1}$ is in the maximal ideal of $\O_K$, which is the kernel of $\pi_K$, and thus $a^{-1}\in \O$. Next, let
$a\in \O_K$ and $a\notin \O$. Then~$a$ does not lie in the maximal ideal of $\O_K$, so $a^{-1}\in \O_K$. Also
$\pi_K(a)\notin \O_R$, so~$\pi_K(a)^{-1}=\pi_K(a^{-1})\in \O_R$, and thus $a^{-1}\in \O$. A useful consequence of the claim is that the maximal ideal of $\O_K$ is a prime ideal of $\O$. 

Let $\pi_R\colon \O_R\to \k$ be the residue morphism onto the residue field $\k$ of $\O_R$, and 
$$\pi\ :=\  \pi_R\circ (\pi_K|_{\O})\ :\  \O \to \O_R\to \k.$$
We identify the surjective ring morphism $\pi\colon \O\to \k$ with the residue map of $\O$ onto its residue field.
(The place $\pi$ is said to be the {\em composition of the places $\pi_K$ and $\pi_R$}.) 
Let now $v\colon K^\times \to \Gamma$ be a valuation on $K$ with valuation ring $\O$, let~$v_K\colon K^\times \to \Gamma_K$ be a valuation on $K$ with valuation ring $\O_K$, and 
$v_R\colon R^\times \to \Gamma_R$ a valuation on $R$ with valuation ring $\O_R$. 
 
It is routine to check that we have an order preserving group embedding $\Gamma_R\to \Gamma$ sending
$v_R(\pi_K(a))$ to $v(a)$ for $a\in \O_K$ with $\pi_K(a)\ne 0$. We identify $\Gamma_R$ with its image in $\Gamma$ via this embedding. The surjective group morphism
$\Gamma\to \Gamma_K$ sending~$v(a)$ to $v_K(a)$ for $a\in K^\times$ is order preserving with kernel $\Gamma_R$. It follows that $\Gamma_R$ is a convex subgroup of $\Gamma$ and 
$\Gamma/\Gamma_R \cong \Gamma_K$ as ordered abelian groups. 

\subsection*{Observations on short ordered fields}
From \cite[Lemma~5.17]{AD} we recall that an ordered abelian group $\Gamma$ is short iff its ordered set $[\Gamma]$ of archimedean classes is short.
From \cite[Corollary~5.18]{AD} we also quote a basic fact about the preservation of shortness under extensions of ordered abelian groups:

\begin{lemma}\label{lem:5.18}
Let $\Delta\subseteq\Gamma$ be an extension of ordered abelian groups. 
\begin{enumerate}
\item[(i)] If $\operatorname{rank}_{\Q}(\Gamma/\Delta)\le \aleph_0$, then  $\Gamma$ is short iff $\Delta$ is short;
\item[(ii)] if $\Delta$ is convex, then $\Gamma$ is short iff $\Delta$ and $\Gamma/\Delta$ are short.
\end{enumerate}
\end{lemma}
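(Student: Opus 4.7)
Both parts reduce to statements about the ordered set $[\Gamma]$ of archimedean classes via \cite[Lemma~5.17]{AD} cited in the text: $\Gamma$ is short iff $[\Gamma]$ is short.

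For (ii), the plan is to use convexity to get a clean ordered-set splitting of $[\Gamma]$. Every $g \in \Gamma \setminus \Delta$ satisfies $|g| > |d|$ for all $d \in \Delta$, so $[g]$ strictly exceeds every class in $[\Delta]$. Consequently $[\Gamma]$ decomposes as an ordered disjoint union of an initial segment, naturally identified with $[\Delta]$, and a complementary final segment, naturally identified with $[\Gamma/\Delta]$ (the second identification requires a short verification that archimedean equivalence among elements outside $\Delta$ matches that in $\Gamma/\Delta$, which follows again from convexity by absorbing elements of $\Delta$ into integer multiples). Shortness of such an initial-plus-final decomposition is equivalent to shortness of both pieces, since any ordered subset splits accordingly and its cofinality and coinitiality are determined by those of its two parts.

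For (i), the implication $\Gamma$ short $\Rightarrow$ $\Delta$ short is automatic, since shortness passes to ordered subsets. For the converse, I would first reduce to the divisible case: $\Q\Delta = \bigcup_n \tfrac{1}{n!}\Delta$ is a chain-union of subgroups each order-isomorphic to $\Delta$, and any such chain-union of short ordered groups is short (if an ordered subset $S$ had uncountable cofinality, one could pick $s_n \in S$ strictly above $S \cap \tfrac{1}{n!}\Delta$ for each $n$ and produce a countable cofinal subset, a contradiction); similarly for $\Gamma$. Then, working in the divisible setting, choose a $\Q$-basis $\{\gamma_n\}_{n \in \N}$ of $\Gamma/\Delta$, lift it to $\Gamma$, and set $\Delta_n := \Delta + \Q\gamma_0 + \cdots + \Q\gamma_{n-1}$. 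The decisive step is the rank-one lemma: an extension $\Delta' \subseteq \Delta' + \Q\gamma$ of ordered abelian groups adds at most one new archimedean class, because if two elements $\delta_i + q_i\gamma$ (with $q_i \ne 0$, $i = 1,2$) had distinct classes $[\alpha_1] > [\alpha_2]$ both outside $[\Delta']$, then $q_2\delta_1 - q_1\delta_2 = q_2\alpha_1 - q_1\alpha_2 \in \Delta'$ would itself carry the class $[\alpha_1]$, contradicting $[\alpha_1] \notin [\Delta']$. Iterating yields $[\Gamma] \subseteq [\Delta] \cup \{[\alpha_n] : n \in \N\}$, which is short (adding countably many points to a short ordered set preserves shortness by a cofinality argument analogous to the chain-union one above), so $\Gamma$ is short by \cite[Lemma~5.17]{AD}.

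The main obstacle is the rank-one archimedean-class lemma; once that is in hand, the passage to divisible hulls and the chain-union bookkeeping are routine.
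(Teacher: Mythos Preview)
The paper does not prove this lemma at all: it is simply quoted as \cite[Corollary~5.18]{AD}, with no argument given. So there is no ``paper's own proof'' to compare against.

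Your self-contained argument is correct. For (ii), the convexity of $\Delta$ indeed forces $|g|>|d|$ for every $g\in\Gamma\setminus\Delta$ and $d\in\Delta$, so $[\Gamma]$ splits as the ordered sum of an initial copy of $[\Delta]$ and a final copy of $[\Gamma/\Delta]$; your verification that archimedean classes of elements outside $\Delta$ match those of their images in $\Gamma/\Delta$ is right, and shortness of an ordered sum is clearly equivalent to shortness of both summands. For (i), the reduction to the divisible case via the countable chain $\bigcup_n\frac{1}{n!}\Delta$ is sound (your cofinality argument is the standard one: a subset of uncountable cofinality cannot be exhausted by a countable chain of short pieces), and the rank-one lemma is correct as stated---the key identity $q_2\alpha_1-q_1\alpha_2\in\Delta'$ together with $[\alpha_1]>[\alpha_2]$ forces $[q_2\alpha_1-q_1\alpha_2]=[\alpha_1]\in[\Delta']$. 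Adding a countable set of points to a short ordered set preserves shortness, as you note.

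In short: your proof supplies what the paper only cites.
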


\noindent
The following is \cite[Lemma~5.19]{AD}:

\begin{lemma}\label{lem:5.19}
Let $K$ be an ordered field equipped with a convex valuation whose ordered residue field is archimedean. Then $K$ is
short iff its value group is short.
\end{lemma}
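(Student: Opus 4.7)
My plan is to handle the two directions of the equivalence separately. Fix a valuation $v\colon K^\times\to\Gamma$ on $K$ with convex valuation ring $\O_K$ and archimedean residue field $R$.

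For ``$K$ short $\Rightarrow\Gamma$ short'' I would use a section argument. Given any subset $S\subseteq\Gamma$, pick a set-theoretic section $s\colon S\to K^{>0}$ of $v$. For $\gamma<\delta$ in $S$ we have $v(s(\gamma)/s(\delta))=\gamma-\delta<0$, so $s(\gamma)/s(\delta)>0$ lies outside $\O_K$, and convexity of $\O_K$ forces $s(\gamma)/s(\delta)>1$, i.e., $s(\gamma)>s(\delta)$ in $K$. Thus $s$ is an order-reversing injection; $s(S)\subseteq K$ is short since $K$ is, and reversing the order interchanges cofinality with coinitiality, so $S$ is short.

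For the reverse direction, I would first observe that $R$, being archimedean, embeds order-preservingly into $\R$ and is therefore short (every subset of $\R$ has countable cofinality and coinitiality, by density of $\Q$). Let $L\subseteq K$ be a chain; by symmetry it suffices to produce a countable cofinal sequence in $L$ assuming $L$ has no maximum, and after trimming below a chosen element I may assume $L$ has a minimum $a_0$. Consider $V:=\{v(a-a_0):a\in L,\ a>a_0\}\subseteq\Gamma$, which is short. If $V$ has no minimum, shortness of $\Gamma$ yields a strictly decreasing sequence $(\gamma_n)$ coinitial in $V$; pick $a_n\in L$ with $v(a_n-a_0)=\gamma_n$. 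For any $b\in L$ with $b>a_0$ there is $n$ with $\gamma_n<v(b-a_0)$, so $(a_n-a_0)/(b-a_0)$ is positive and outside $\O_K$, hence exceeds $1$ by convexity, giving $a_n>b$; thus $(a_n)$ is a countable cofinal sequence in $L$.

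If instead $V$ has a minimum $\gamma^*$, I restrict to the cofinal subchain $L_*:=\{a\in L:v(a-a_0)=\gamma^*\}$; cofinality is clear because any $a>a_0$ with $v(a-a_0)>\gamma^*$ is strictly dominated by every element of $L_*$. Fixing $t\in K^{>0}$ with $v(t)=\gamma^*$, the residue map $\rho\colon a\mapsto\pi_K((a-a_0)/t)$ sends $L_*$ weakly order-preservingly into $R^{>0}$. If $\rho(L_*)$ has no maximum, shortness of $R$ lifts a countable cofinal sequence in $\rho(L_*)$ to one in $L_*$. If $\rho(L_*)$ has maximum $r^*$, pick $\tilde r\in\O_K$ with $\pi_K(\tilde r)=r^*$, shift the basepoint to $a_0+\tilde r\,t$, restrict to the cofinal subchain on which $\rho$ equals $r^*$, and recurse at a strictly higher level in $\Gamma$. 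The main obstacle is controlling this potentially transfinite recursion: since $\Gamma$ is short it contains no uncountable well-ordered subset, so the tower of successive levels produced by repeated ``max residue'' steps has countable length, and the countable cofinal sequences produced at the stages at which we fall out of the recursion merge via a diagonal argument into a single countable cofinal sequence in $L$.
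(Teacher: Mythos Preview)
The paper does not prove this lemma; it merely quotes it from \cite[Lemma~5.19]{AD}. So there is no in-paper proof to compare against, and I can only assess your argument on its own terms.

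Your forward direction is correct. The section $s\colon S\to K^{>0}$ is order-reversing for exactly the reason you give, and shortness transfers. (A minor streamlining: $v$ itself is a weakly decreasing surjection $K^{>}\to\Gamma$, since $0<a<b$ gives $a/b\in\O_K$ by convexity, hence $v(a)\ge v(b)$; this avoids choosing a section.)

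Your reverse direction has a genuine gap. When you shift the basepoint to $a_0':=a_0+\tilde r\,t$, nothing guarantees $a_0'<a$ for $a\in L_{**}$: you only know $v(a-a_0')>\gamma^*$, not the sign of $a-a_0'$. But your Case~1 comparison at the next stage (concluding $a_n>b$ from $v(a_n-a_0')<v(b-a_0')$) uses positivity of $a_n-a_0'$. This breaks. A clean repair is to take the new basepoint to be any \emph{element of $L_{**}$}; then everything above it in $L$ automatically lies in $L_{**}$ (since $L_{**}$ dominates $L_*\setminus L_{**}$ and $L_*$ dominates $L\setminus L_*$), and the needed positivity is immediate. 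Your treatment of limit stages is also too sketchy: you need to say what $L^{(\lambda)}$ and $a_0^{(\lambda)}$ are (e.g., check whether the basepoints so far are cofinal; if not, pick $a_0^{(\lambda)}\in L$ above them all and verify $\gamma^{(\lambda)}>\gamma^{(\alpha)}$ for $\alpha<\lambda$). The ``diagonal argument'' remark is misleading: once you exit the recursion in Case~1 or Subcase~2a at some stage, you already have a single countable cofinal sequence in that stage's (cofinal) subchain.

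There is a much shorter route, using the fact recalled just before the lemma that an ordered abelian group $G$ is short iff its ordered set $[G]$ of archimedean classes is short. For $K$ with convex valuation and archimedean residue field, $[a]\mapsto v(a)$ (for $a\in K^{>0}$) is an order-reversing bijection $[K^{\ne}]\to\Gamma$: if $v(a)=v(b)$ then $a/b$ is a unit with residue in the archimedean field $R$, so $a/b<n$ for some $n$; conversely $[a]=[b]$ forces $v(a)=v(b)$. Hence $[K]$ is short iff $\Gamma$ is short, and the lemma follows in one line.
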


\noindent
The next result extends this to nonarchimedean residue fields. 

\begin{lemma}\label{shorto} Let $K$ be an ordered field equipped with a convex valuation.
Then $K$ is short iff
 its ordered residue field $R$ and value group 
$\Gamma_K$ are short. 
\end{lemma}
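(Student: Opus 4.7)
\emph{Proof plan.} The plan is to reduce to the archimedean residue field case (Lemma~\ref{lem:5.19}) by composing the given convex valuation on $K$ with the natural valuation on $R$. Let $v_K\colon K^\times\to\Gamma_K$ be the given convex valuation on $K$, with valuation ring~$\O_K$ and residue field $R$, and let $v_R\colon R^\times\to\Gamma_R$ be the natural valuation on $R$, whose valuation ring $\O_R$ is the convex hull of $\Z$ in $R$ and whose residue field $\k$ is archimedean. Form the composition $v\colon K^\times\to\Gamma$ as in the preceding subsection, with valuation ring $\O=\pi_K^{-1}(\O_R)$.

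Next I would verify that $\O$ is a convex subring of $K$: if $a\in K$ satisfies $|a|\le n$ for some $n\in\N$, then $a\in\O_K$ and $|\pi_K(a)|\le n$ in $R$, so $\pi_K(a)\in\O_R$ and hence $a\in\O$. Thus $v$ is a convex valuation on $K$, its residue field is $\k$ (archimedean), and by the discussion recalled above, $\Gamma_R$ is a convex subgroup of $\Gamma$ with $\Gamma/\Gamma_R\cong\Gamma_K$ as ordered abelian groups.

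Now the conclusion follows by chaining together results already stated. Applying Lemma~\ref{lem:5.19} to $(K,v)$ (whose residue field $\k$ is archimedean) gives: $K$ is short iff $\Gamma$ is short. Applying Lemma~\ref{lem:5.18}(ii) to the convex subgroup $\Gamma_R\subseteq\Gamma$ gives: $\Gamma$ is short iff both $\Gamma_R$ and $\Gamma/\Gamma_R\cong\Gamma_K$ are short. Finally, applying Lemma~\ref{lem:5.19} to $(R,v_R)$ gives: $R$ is short iff $\Gamma_R$ is short. Combining these three equivalences yields: $K$ is short iff $R$ and $\Gamma_K$ are short, as desired.

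There is no real obstacle; the only point requiring a modicum of care is the verification that the composed valuation $v$ remains convex on $K$, which is routine once the natural valuation on $R$ is used. The proof is essentially a bookkeeping exercise combining Lemmas~\ref{lem:5.18} and~\ref{lem:5.19} via valuation composition.
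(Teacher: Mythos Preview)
Your argument is correct and follows essentially the same route as the paper: compose the given convex valuation with the natural valuation on $R$, then invoke Lemmas~\ref{lem:5.18}(ii) and~\ref{lem:5.19}. Your packaging is in fact slightly tidier, since you run everything as a chain of equivalences and get both directions at once; the paper uses the composed valuation only for the ``if'' direction and handles the converse by a separate direct argument (shortness of the convex subgroups $\O_K$ and $\smallo_K$ gives shortness of $R$ via Lemma~\ref{lem:5.18}(ii), and the order-reversing surjection $v_K\colon K^{>}\to\Gamma_K$ gives shortness of $\Gamma_K$).

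One small quibble: your stated check that $\O$ is convex actually only shows that $\O$ contains the convex hull of $\Z$ in $K$. For convexity of $\O$ itself, argue directly that $0\le a\le b\in\O$ implies $a\in\O_K$ (by convexity of $\O_K$) and then $0\le\pi_K(a)\le\pi_K(b)\in\O_R$, so $\pi_K(a)\in\O_R$ (by convexity of $\O_R$), hence $a\in\O$.
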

\begin{proof} 
Suppose first that~$R$ and $\Gamma_K$ are short.
Let $v_K\colon K^\times \to \Gamma_K$ be the given convex valuation on $K$, with valuation ring~$\O_K$
and residue map $\pi\colon \O_K\to R$. Set~$\O_R:=\big\{x\in R: \text{$|x|\le n$  for some $n$}\big\}$, the smallest convex subring of
the ordered field $R$. Then $\O_R$ has archimedean ordered residue field $\k$. Let
$v_R\colon  R^\times\to \Gamma_R$ be a valuation on $R$ with valuation ring~$\O_R$. 

The subring $\O:=\pi^{-1}(\O_R)$ of $\O_K$ is convex. Let $v\colon K^\times \to \Gamma$ be a convex valuation on $K$ with $\O$ as  valuation ring. The considerations in the previous subsection show that we may consider $\k$ as the 
ordered residue field of $\O$, and $\Gamma_R$ as a convex subgroup of $\Gamma$ with $\Gamma/\Gamma_R\cong \Gamma_K$, as ordered abelian groups.

 Now $R$ is short, hence so are $\Gamma_R$ and $\k$ by Lemma~\ref{lem:5.19}. Thus $\Gamma$ is short by Lem\-ma~\ref{lem:5.18}(ii) and so is $K$, using the (convex) valuation $v$ on $K$ and again Lem\-ma~\ref{lem:5.19}. 
 
This shows the ``if'' direction. For the converse, suppose  $K$ is short. 
Then the valuation ring of $K$ and its maximal ideal are  short convex ordered additive subgroups of~$K$, so
$R$ is short by Lemma~\ref{lem:5.18}(ii). The ordered subset~$K^>$ of $K$ is also short, 
  hence so is its image $\Gamma_K$  under the decreasing surjection $v_K$.
\end{proof} 

\begin{lemma}\label{kl} Let $K$ be a short ordered field and $L$ an ordered field extension of countable transcendence degree over $K$. Then $L$ is short. 
\end{lemma}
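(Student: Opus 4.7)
The plan is to equip $L$ (and its subfield $K$) with the \emph{natural convex valuation}---the one whose valuation ring $\O_L := \{x \in L : |x| \leq n \text{ for some } n \in \N\}$ is the convex hull of $\Z$---and to deduce shortness of $L$ by verifying the hypotheses of Lemma~\ref{shorto}. The residue field $R_L$ is archimedean by construction, hence embeds into $\R$, so it is short (since any ordered subset of a short ordered set is short). Moreover $\O_L \cap K$ equals the convex hull $\O_K$ of $\Z$ in $K$, and the two maximal ideals match as well, so the natural valuation on $L$ restricts to the natural valuation on $K$, exhibiting $\Gamma_K$ as an ordered subgroup of $\Gamma_L$. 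Applying Lemma~\ref{shorto} to the short ordered field $K$ with its natural valuation then yields that $\Gamma_K$ is short.

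It remains to prove $\Gamma_L$ is short. I fix a transcendence basis $(t_n)_{n \in \N}$ of $L$ over $K$ and let $F_n$ be the set of elements of $L$ algebraic over $K(t_1,\dots,t_n)$; this is a subfield of $L$, and $L = \bigcup_n F_n$. By Abhyankar's inequality,
\[
\dim_\Q\bigl((\Gamma_{K(t_1,\dots,t_n)}/\Gamma_K)\otimes\Q\bigr) \leq n,
\]
and passage from $K(t_1,\dots,t_n)$ to its algebraic extension $F_n$ only adjoins torsion to the value group quotient. Taking the directed union over $n$ gives $\dim_\Q\bigl((\Gamma_L/\Gamma_K)\otimes\Q\bigr) \leq \aleph_0$, so Lemma~\ref{lem:5.18}(i) yields $\Gamma_L$ short. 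A final application of Lemma~\ref{shorto} to $L$ with its natural valuation then concludes that $L$ is short.

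I anticipate no serious obstacle here. The only nonelementary input is Abhyankar's inequality in finite transcendence degree, which is classical, and its passage to countable transcendence degree is a routine direct-limit argument as sketched above.
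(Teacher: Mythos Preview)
Your proof is correct and follows essentially the same route as the paper: equip $K$ and $L$ with the natural convex valuation, use that the archimedean residue field is short, invoke Abhyankar to bound $\operatorname{rank}_{\Q}(\Gamma_L/\Gamma_K)$ by $\aleph_0$, apply Lemma~\ref{lem:5.18}(i), and conclude shortness of $L$. The only cosmetic difference is that the paper cites Lemma~\ref{lem:5.19} (the archimedean special case) rather than the more general Lemma~\ref{shorto}, and states the rank bound without spelling out the filtration argument you give.
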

\begin{proof} We equip $K$ and $L$ with their standard convex valuation whose ordered residue fields are archimedean. Then the value group of $L$ has countable rational rank over the value group of $K$, and the latter being short, so is the former by  Lemma~\ref{lem:5.18}(i).
Hence $L$ is short by Lemma~\ref{lem:5.19}. 
\end{proof}

\section{Alternative Construction of Newton-Liouville Closures}\label{sec:altnlc}

\noindent 
{\it In this section $H$ is an $H$-field with asymptotic couple $(\Gamma, \psi)$.}\/ For $s\in H$ we set $$\Gamma_s\ :=\ \big\{v(s-h^\dagger):\, h\in H^\times\big\}\ \subseteq\ \Gamma_{\infty}.$$
In particular, $\infty\in \Gamma_s$ iff $s\in H^\dagger:=\{h^\dagger:\ h\in H^\times\}$.  

\begin{lemma} \label{ahd} The following are equivalent: \begin{enumerate}
\item[\rm(i)] $H$ is  closed;
\item[\rm(ii)] $H$ is $\upo$-free, real closed, newtonian, and there is no $s\in H$ with $\Gamma_s\subseteq \Psi^{\downarrow}$.
\end{enumerate}
\end{lemma}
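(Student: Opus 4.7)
The plan is to prove the two implications separately.

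\emph{Direction (i) $\Rightarrow$ (ii).} If $H$ is closed, then $H$ is $\upo$-free and newtonian by definition, and its Liouville closedness in particular entails real closedness, supplying the first three conditions of (ii). The fourth is equally direct: for every $s\in H$, Liouville closedness provides $h\in H^\times$ with $h^\dagger=s$, so $\infty = v(s-h^\dagger)\in\Gamma_s$. Since $\infty\notin\Psi^{\downarrow}$ by definition of $\Psi^{\downarrow}\subseteq\Gamma$, this gives $\Gamma_s\not\subseteq\Psi^{\downarrow}$.

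\emph{Direction (ii) $\Rightarrow$ (i).} Here lies the real content. Given $\upo$-freeness, newtonianity, and real closedness, the task is to verify Liouville closedness, i.e., the two equalities $H'=H$ and $H\subseteq H^\dagger$. I would handle integration first, using the standard fact that in any $\upo$-free newtonian $H$-field the equation $Y'=f$ for $f\in H$ reduces, via asymptotic integration (available because $H$ is $\upo$-free), to a Newton-degree-one problem in $\smallo$, which newtonianity then solves—the relevant reductions are supplied by the Newton-polynomial machinery of [ADH]. For exponential integration, fix $s\in H$; the $\Gamma_s$-hypothesis yields $h\in H^\times$ with $v(s-h^\dagger)\notin\Psi^{\downarrow}$. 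If $s=h^\dagger$ we are done, so suppose $t:=s-h^\dagger\in H^\times$, in which case the failure of $vt\in\Psi^{\downarrow}$ forces $vt>\Psi$. Writing the desired $y=hz$, it suffices to produce $z\in H^\times$ with $z^\dagger=t$, since then $y^\dagger=h^\dagger+z^\dagger=s$. The equation $z^\dagger=t$ amounts to solving $Y'-tY=0$; the condition $vt>\Psi$, combined with $\upo$-freeness, places the operator $\der-t$ in the quasilinear regime governed by Newton degree one, and newtonianity delivers a nonzero solution $z\in H^\times$.

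The principal technical hurdle is the Newton-polynomial computation for the linear operator $\der-t$ when $vt>\Psi$: one must confirm that the hypothesis $vt>\Psi$ indeed forces Newton degree one in the $\upo$-free setting, and that the zero supplied by newtonianity is nontrivial so as to give $z\in H^\times$ rather than $z=0$. It is precisely this delicate compatibility between the $\Gamma_s$-condition of (ii) and the Newton-polynomial apparatus of [ADH] that makes the alternative axiomatization of closedness in this lemma possible, and where the bulk of the argument's content will reside.
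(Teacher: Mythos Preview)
Your direction (i)$\Rightarrow$(ii) is fine and matches the paper. For (ii)$\Rightarrow$(i), the integration step is also handled the same way (the paper simply cites that newtonian implies closed under integration).

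The exponential-integration step, however, has a genuine problem in the strategy you sketch. Applying newtonianity directly to the homogeneous operator $P(Y)=Y'-tY$ buys you nothing: $P(0)=0$ and $0\in\smallo$, so the Newton-degree-one condition is vacuously satisfied and no nontrivial $z$ is produced. You flag this yourself, but the framing suggests it is a matter of checking hypotheses, when in fact the approach as stated cannot succeed.

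The fix is to change the unknown: seek $z$ of the form $1+\varepsilon$ with $\varepsilon\prec 1$, so that the equation becomes the \emph{inhomogeneous} $Q(\varepsilon)=\varepsilon'-t\varepsilon-t=0$. Now $Q(0)=-t\ne 0$, and with $vt>\Psi$ one checks $Q$ has Newton degree~$1$, so newtonianity yields $\varepsilon\in\smallo$ with $Q(\varepsilon)=0$; then $1+\varepsilon\in H^\times$ and $(1+\varepsilon)^\dagger=t$. This is precisely the content of [ADH,~14.2.5], which states $\I(H)\subseteq(1+\smallo)^\dagger$ for newtonian $H$. The paper's proof simply invokes this: from $\Gamma_s\not\subseteq\Psi^{\downarrow}$ one gets $h\in H^\times$ with $s-h^\dagger\in\I(H)$, hence $s-h^\dagger\in(1+\smallo)^\dagger$, hence $s\in H^\dagger$. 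So the ``bulk of the argument's content'' you anticipate is already packaged in [ADH,~14.2.5], and once you cite it the proof is three lines.
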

\begin{proof} If $H$ is closed, then $H^\dagger=H$, so (ii) holds. Now assume (ii). To derive (i), it suffices to show that $H$ is Liouville closed. Now $H$ is newtonian, so is closed under integration. Let $s\in H$; it is enough to show that then $s\in H^\dagger$. Now $\Gamma_s\not\subseteq \Psi^{\downarrow}$, so we have $h\in H^\times$ with
$s-h^\dagger\in \I(H)$, so $s-h^\dagger\in (1+\smallo)^\dagger$ by [ADH, 14.2.5], and thus $s\in H^\dagger$. 
\end{proof}

\noindent
Next a part of  [ADH, 10.5.20] (replacing $s$, $f$ there by $-s$, $f^{-1}$ if necessary):  

\begin{lemma} \label{hda} Suppose $\Gamma\ne \{0\}$, $H$ is real closed, $s\in H$, and $\Gamma_s\subseteq \Psi^{\downarrow}$. Then there exists an $f$ in an $H$-field extension of $H$ such that:  \begin{enumerate}
\item[\rm(i)] $f$ is transcendental over $H$ and $f^\dagger=s$;
\item[\rm(ii)] the pre-$H$-field extension $H(f)$ of $H$ is an $H$-field with the same constant field as $H$; and
\item[\rm(iii)] for the asymptotic couple $(\Gamma_f, \psi_f)$
of $H(f)$, viewed as an extension of $(\Gamma,\psi)$, we have $vf\in \Gamma_f\setminus \Gamma$, 
$\Gamma_f=\Gamma\oplus \Z vf$, with $\Psi_f$ cofinal in $\Psi$.
\end{enumerate}
\end{lemma}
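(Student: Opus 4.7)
The plan is to derive this statement essentially by citation of [ADH, 10.5.20], with a small amount of sign bookkeeping. The cited lemma produces, under a hypothesis phrased in terms of approximability of $s$ by logarithmic derivatives, a transcendental exponential integral $f$ of $s$ (or of $-s$, depending on the normalization) living in an $H$-field extension; it also describes the constants, the value group, and the $\Psi$-set of the extension. The parenthetical remark in the excerpt---``replacing $s$, $f$ there by $-s$, $f^{-1}$ if necessary''---tells us exactly that the role of [ADH, 10.5.20] here is symmetric under $f \mapsto f^{-1}$, i.e.\ under the transformation $(s, vf) \mapsto (-s, -vf)$, so both signs of $vf$ are accessible.

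First I would translate the hypothesis. The condition $\Gamma_s \subseteq \Psi^{\downarrow}$ means that for every $h \in H^{\times}$ we have $v(s - h^\dagger) \in \Psi^{\downarrow}$: no element of $H^\dagger$ approximates $s$ better than the downward closure of $\Psi$ allows. This is precisely the form of hypothesis under which [ADH, 10.5.20] yields a transcendental exponential integral; the assumptions $\Gamma \ne \{0\}$ and $H$ real closed are also part of the setup there, the former so that $\Psi^{\downarrow}$ is a nontrivial convex set and the latter to make the algebraic adjustments in the construction available.

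Next I would read off the three conclusions from [ADH, 10.5.20]. Conclusion (i), that $f$ is transcendental over $H$ and $f^\dagger = s$, is immediate from the construction, with transcendence coming from the fact that $s$ is not approximated by logarithmic derivatives of $H^\times$ within $\Psi^{\downarrow}$. Conclusion (ii), that $H(f)$ is an $H$-field with $C_{H(f)} = C_H$, is the ``pre-$H$-field becomes $H$-field'' part of the statement there; preservation of constants follows because any new constant would yield a nontrivial element $c \in H(f)^{\times}$ with $c^\dagger = 0$, forcing $s \in H^\dagger$ and contradicting $\Gamma_s \subseteq \Psi^{\downarrow}$. Conclusion (iii), the decomposition $\Gamma_f = \Gamma \oplus \Z vf$ with $vf \notin \Gamma$ and $\Psi_f$ cofinal in $\Psi$, is the explicit valuation-theoretic description given by [ADH, 10.5.20]: the only new value group element is $vf$, and the only new logarithmic derivative is $s = f^\dagger$, whose valuation lies in $\Psi^{\downarrow}$ and so contributes nothing above $\Psi$.

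The main obstacle, and really the only one, is the sign bookkeeping: [ADH, 10.5.20] is stated for one choice of sign of $vf$ (equivalently, one of the two symmetric forms of the hypothesis), and to recover the symmetric statement here one must, if the signs do not match directly, apply the cited result to $-s$ and then set our $f$ to be the reciprocal of the $f$ produced there, using $(f^{-1})^\dagger = -f^\dagger$ and $v(f^{-1}) = -vf$. Once this is done the three conclusions transfer verbatim, and no further argument is needed.
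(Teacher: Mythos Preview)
Your proposal is correct and matches the paper's approach exactly: the lemma is stated as ``a part of [ADH, 10.5.20] (replacing $s$, $f$ there by $-s$, $f^{-1}$ if necessary)'' with no further proof given. Your sign bookkeeping via $(f^{-1})^\dagger=-f^\dagger$ and $v(f^{-1})=-vf$ is precisely the content of that parenthetical.
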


\noindent
Suppose $H$ is real closed with $\Gamma\ne \{0\}$. Transfinitely iterating the extension procedure of the lemma above, alternating it with taking real closures, and taking unions at limit stages, we obtain a real closed $\d$-algebraic
$H$-field extension $H^*$ of $H$ with asymptotic couple $(\Gamma^*,\psi^*)$ of $H^*$ extending
$(\Gamma, \psi)$, such that: \begin{enumerate}
\item $H^*$ has the same constant field as $H$, and $\Psi$ is cofinal in $\Psi^*$;
\item for all $s\in H$ we have $\Gamma^*_s\not\subseteq \Psi^*{}^{\downarrow}$; and
\item $\Gamma^*=\Gamma \oplus \bigoplus_{i\in I} \Q vf_i$ (internal direct sum) 
where $(f_i)$ is a family of nonzero elements of $H^*$ with $f_i^\dagger\in H$ for all $i\in I$. 
\end{enumerate} 
Given any archimedean class $[\gamma^*]$ with $0\ne\gamma^*\in \Gamma^*$ we choose
 $a\in H^\times$, distinct~$i_1,\dots, i_n\in I$, and $k_1,\dots, k_n\in \Z^{\ne}$, such that
$$af^{k_1}_{i_1}\cdots f^{k_n}_{i_n}\succ 1, \qquad v\big(af^{k_1}_{i_1}\cdots f_{i_n}^{k_n}\big)\in [\gamma^*].$$ 
Associating to $[\gamma^*]$ the element 
$(af^{k_1}_{i_1}\cdots f^{k_n}_{i_n})^\dagger=a^\dagger+k_1f^\dagger_{i_1}+\cdots + k_nf^\dagger_{i_n}$ of $H^{>}$, we obtain a strictly increasing map $[(\Gamma^*)^{\ne} ]\to H^{>}$, by [ADH, 10.5.2].
Recall that the ordered residue field of $H$ is isomorphic to the ordered constant field of $H$.
It follows that if $H$ is short, then so are the ordered residue field of $H$ 
and the ordered set $[\Gamma^*]$,  hence $\Gamma^*$ is short as well by the remark before Lemma~\ref{lem:5.18}. 
Thus if $H$ is short, then so is $H^*$  by Lemma~\ref{shorto}. 

The process leading from $H$ to $H^*$ can now be applied to $H^*$ instead of $H$, and iterating this process
$\omega$ times, and taking a union gives us a real closed
$H$-field extension $H^{\#}$ of $H$ with asymptotic couple $\big(\Gamma^{\#}, \psi^{\#}\big)$ extending 
$(\Gamma, \psi)$ such that: \begin{enumerate}
\item[(4)] $H^{\#}$ has the same constant field as $H$, and $\Psi$ is cofinal in $\Psi^{\#}$;
 \item[(5)] there is no $s\in H^{\#}$ with $\Gamma^{\#}_s\subseteq \Psi^{\#\downarrow}$; and
 \item[(6)]  if $H$ is short, then so is $H^{\#}$.
 \end{enumerate} 
Let now $H$ be $\upo$-free and real closed. We build a sequence $(H_n)$ of $\upo$-free
real closed $\d$-algebraic $H$-field extensions of $H$, with
$H_{n+1}$ extending $H_n$ for all $n$: \begin{itemize}
\item $H_0:=H$,
\item for even $n$, $H_{n+1}:= H_n^{\#}$, 
\item for odd $n$, $H_{n+1}$ is a maximal immediate $\d$-algebraic $H$-field extension of~$H_n$ (so $H_{n+1}$ is newtonian by [ADH, 10.5.8, 14.0.1]).
\end{itemize} 
Then $H_{\infty}:= \bigcup_n H_n$ is closed by Lemma~\ref{ahd}, in view of (5) above. Since $H_{\infty}$ is also $\d$-algebraic over $H$ and has the same constant field as $H$, it follows that $H_{\infty}$ is a Newton-Liouville closure of $H$, by  [ADH, pp.~669, 685]. 

Now shortness is inherited by
immediate extensions of $H$-fields, so if $H$ is short, then so is $H_{\infty}$ in view of (6) above. 
 This leads to:

\begin{cor}\label{corupo} Suppose $H$ is $\upo$-free, real closed, and short,  and $E$ is a differentially algebraic $H$-field extension of $H$ with the same constant field as $H$. Then $E$ is short.
\end{cor}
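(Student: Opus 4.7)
The plan is to embed $E$ into the Newton-Liouville closure $H_\infty$ of $H$ constructed just above the statement; since $H_\infty$ is short and shortness is obviously inherited by ordered subsets, this will force $E$ to be short.

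To produce the embedding, I would first extend $E$ to a Newton-Liouville closure $F$ of $E$, using the general existence theorem for Newton-Liouville closures in [ADH]. By definition $F$ is closed, differentially algebraic over $E$, and has the same constant field as $E$. Combined with the hypotheses on $E$ and the transitivity of differential algebraicity, this shows that $F$ is also differentially algebraic over $H$ with the same constant field as $H$; in other words, $F$ is itself a Newton-Liouville closure of $H$.

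Now I would invoke the uniqueness of Newton-Liouville closures of $\upo$-free $H$-fields from [ADH]: since $H$ is $\upo$-free, there is an $H$-field isomorphism $F\cong H_\infty$ fixing $H$ pointwise. Restricting this isomorphism to the $H$-subfield $E$ of $F$ yields the desired embedding $E\hookrightarrow H_\infty$, and thus $E$ is short.

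The main obstacle I expect is the existence of $F$: since $E$ is not assumed $\upo$-free or even real closed, one cannot simply re-run the construction of Section~\ref{sec:altnlc} starting from $E$. One must instead rely on the general existence theorem for Newton-Liouville closures from [ADH] and track the constant field at each intermediate step, so that the resulting $F$ really does have the same constants as $H$ and can play the role of a Newton-Liouville closure of $H$. Once this bookkeeping is in place, the uniqueness of the Newton-Liouville closure over the $\upo$-free base $H$ completes the argument.
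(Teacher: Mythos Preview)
Your proposal is correct and matches the paper's proof almost verbatim: the paper also takes a Newton-Liouville closure of $E$, notes it is a Newton-Liouville closure of $H$, and invokes the uniqueness theorem [ADH, 16.2.1] to identify it with $H_\infty$. The obstacle you anticipate dissolves because $E$, being $\d$-algebraic over the $\upo$-free $H$, is itself $\upo$-free by [ADH, 13.6.1], so the standard Newton-Liouville closure of $E$ from [ADH] exists with the same constant field and no extra bookkeeping is needed.
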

\begin{proof} Take a Newton-Liouville closure of $E$. This is also a Newton-Liouville closure of $H$ and
  isomorphic to the $H_{\infty}$ above, by  [ADH, 16.2.1]. So this Newton-Liouville closure
of $E$ is short, and thus $E$ is short.  
\end{proof}

\section{Proof of the Main Theorem}\label{sec:proof}

\noindent
We still need a few generalities about preserving shortness:

\subsection*{From pre-$H$-fields to $\upo$-free $H$-fields} By [ADH, p.~445], a pre-$H$-field $K$ has a ``smallest'' $H$-field extension,  the {\em $H$-field hull $\operatorname{H}(K)$}\/ of $K$, whose underlying valued differential field is the pre-$\d$-valued hull $\dv(K)$ of $K$. 

\begin{lemma}\label{hfh} Let $K$ be a short pre-$H$-field. Then $\operatorname{H}(K)$ is short.
\end{lemma}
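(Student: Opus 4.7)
My plan is to reduce to Lemma~\ref{shorto} by showing that $\operatorname{H}(K)$ is an immediate extension of $K$ as an ordered valued field. The underlying valued differential field of $\operatorname{H}(K)$ is the pre-$\d$-valued hull $\dv(K)$, which is constructed in [ADH] by adjoining, for each residue class in $\bar K$ not already represented by a constant of $K$, a new constant with that prescribed residue. Concretely, given $a \in \O_K$ with residue $\bar a$, one solves $y' = a'$ in some valued differential extension with $v(y) > 0$ and sets $c := a - y$; this is a new constant with residue $\bar a$, and the step introduces no new valuation and no new residue.

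Granting that $\operatorname{H}(K)/K$ is immediate, the value group $\Gamma_K$ and the ordered residue field of $K$ coincide with those of $\operatorname{H}(K)$. Since $K$ is short, Lemma~\ref{shorto} yields shortness of $\Gamma_K$ and of the residue field of $K$; applying Lemma~\ref{shorto} again, this time in the converse direction to $\operatorname{H}(K)$, gives that $\operatorname{H}(K)$ is short.

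The delicate point will be pinning down that $\operatorname{H}(K)/K$ is genuinely immediate at the level of ordered valued fields, i.e.\ that the ordering on $\dv(K)$ extending that on $K$ does not surreptitiously enlarge the residue field or value group. This is part of the construction of the $H$-field hull in [ADH, Chapter~10], which I would cite precisely; modulo this one input, the lemma reduces to two applications of Lemma~\ref{shorto}.
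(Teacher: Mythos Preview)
Your approach is essentially the paper's: both use that $\operatorname{H}(K)$ has the same residue field as $K$ (from [ADH, remarks preceding 10.3.2]) and that its value group is controlled by $\Gamma_K$ (from [ADH, 10.3.2]), then invoke Lemma~\ref{shorto}. The only cosmetic difference is that the paper routes the value-group step through Lemma~\ref{lem:5.18}(i) rather than asserting outright immediacy.
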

\begin{proof} By  [ADH, 10.3.2] and Lemma~\ref{lem:5.18}(i), the value group of $\operatorname{H}(K)$ is short, and by [ADH, remarks preceding   10.3.2]  its ordered residue field equals that of $K$, and so is short as well. Hence $\operatorname{H}(K)$ is short by Lemma~\ref{shorto}.
\end{proof} 

\noindent
The next lemma has almost the same proof as \cite[Corollary 2.18]{AD}. It is used for Hardy fields that do not contain 
$\R$ and might therefore not be $H$-fields. 

\begin{lemma}\label{hfs} If the Hardy field $H$ is short, then so is the Hardy field $H(\R)$.
\end{lemma}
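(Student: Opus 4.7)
The plan is to equip $H$ and $H(\R)$ with their standard convex valuations, whose ordered residue fields are archimedean subfields of $\R$: that of $H$ is its constant field $C := H\cap\R$, and that of $H(\R)$ is $\R$ itself (since $\R\subseteq H(\R)$). By Lemma~\ref{lem:5.19} applied to both, the shortness of $H$ (respectively of $H(\R)$) is equivalent to the shortness of its value group $\Gamma_H$ (respectively of $\Gamma_{H(\R)}$); so it suffices to transfer shortness across the inclusion $\Gamma_H \subseteq \Gamma_{H(\R)}$.

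For this I would invoke Lemma~\ref{lem:5.18}(i) by showing that the quotient $\Gamma_{H(\R)}/\Gamma_H$ has rational rank zero (in fact, is a torsion group). Given $\gamma\in\Gamma_{H(\R)}$, pick $f\in H(\R)^\times$ with $v(f)=\gamma$ and write $f\in H(r_1,\ldots,r_n)$ for some $r_1,\ldots,r_n\in\R$. Abhyankar's inequality applied to the finitely generated valued field extension $H(r_1,\ldots,r_n)/H$ yields
\[
\operatorname{tr.deg}\bigl(H(r_1,\ldots,r_n)/H\bigr)\ \ge\ \operatorname{tr.deg}(k/C)\ +\ \operatorname{rank}_{\Q}\bigl(\Gamma_{H(r_1,\ldots,r_n)}/\Gamma_H\bigr),
\]
where $k$ is the residue field of $H(r_1,\ldots,r_n)$. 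Because each $r_i$ has valuation zero and residue equal to $r_i$ itself, any $C$-algebraic relation in $\R$ among $r_1,\ldots,r_n$ lifts to an $H$-algebraic relation; consequently the transcendence degree on the left-hand side coincides with the residue transcendence degree $\operatorname{tr.deg}(k/C)$ on the right. Thus $\operatorname{rank}_{\Q}(\Gamma_{H(r_1,\ldots,r_n)}/\Gamma_H)=0$, so $\gamma$ is torsion modulo $\Gamma_H$, as required.

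The main obstacle is the Abhyankar step: verifying that adjoining real constants to $H$ enlarges the transcendence degree and the residue-field transcendence degree by exactly the same amount. This is essentially the observation that constants inject into the residue field without any twist, but must be stated precisely when the $r_i$ are not $C$-algebraically independent. Once this is in hand, the chain
\[
H\text{ short}\iff \Gamma_H\text{ short}\iff \Gamma_{H(\R)}\text{ short}\iff H(\R)\text{ short},
\]
obtained from Lemmas~\ref{lem:5.19} and~\ref{lem:5.18}(i), completes the proof.
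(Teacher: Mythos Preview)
Your argument has a genuine gap: the assertion that the residue field of the Hardy field $H$ under its natural valuation equals the constant field $C=H\cap\R$ is false in general. A Hardy field is only a \emph{pre}-$H$-field, so while $C$ embeds into the residue field $k_H$ (the field of limits of bounded germs), the inclusion can be strict. For instance, take $H=\Q(x,\arctan x)$: here $C=\Q$, but $\arctan x$ is a bounded germ with limit $\pi/2$, so $k_H\supseteq\Q(\pi)\supsetneq\Q$.

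This breaks the Abhyankar step. The correct Abhyankar inequality for $H(r_1,\dots,r_n)/H$ has $\operatorname{tr.deg}\bigl(k/k_H\bigr)$ on the right, not $\operatorname{tr.deg}(k/C)$. Your version with $C$ is simply false: in the example above take $r_1=\sqrt 2$, so $H(\sqrt 2)/H$ is algebraic and the left side is $0$, while $k=\res H(\sqrt 2)\supseteq\Q(\sqrt 2,\pi)$ has $\operatorname{tr.deg}(k/C)\ge 1$. And with the correct residue field $k_H$ your equality argument no longer goes through: take $r_1=\pi$, which is transcendental over $H$ (so the left side is $1$) but lies in $k_H$, so $\operatorname{tr.deg}\bigl(\res H(\pi)/k_H\bigr)=0$; Abhyankar then only yields $\operatorname{rank}_{\Q}\bigl(\Gamma_{H(\pi)}/\Gamma_H\bigr)\le 1$. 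The ``lifting $C$-relations to $H$-relations'' step shows $\operatorname{tr.deg}_H H(\vec r)\le\operatorname{tr.deg}_C C(\vec r)$, but you need the comparison with $\operatorname{tr.deg}_{k_H} k_H(\vec r)$, and that inequality can go the wrong way when $C\subsetneq k_H$.

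The paper sidesteps exactly this issue by first replacing $H$ with its $H$-field hull $E$ inside $H(\R)$: Lemma~\ref{hfh} shows $E$ is still short, and since $E$ \emph{is} an $H$-field one has $\res E=C_E$, at which point \textup{[ADH, 10.5.15]} gives $\Gamma_{E(\R)}=\Gamma_E$ directly, with no Abhyankar bookkeeping needed. Your strategy becomes correct once you insert this reduction to the $H$-field case.
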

\begin{proof} This is clear if $H\subseteq \R$. Assume $H$ is a short Hardy field and $H\not\subseteq \R$. Let~$E$ be the $H$-field hull of~$H$, taken as an $H$-subfield of the Hardy field extension~$H(\R)$ of~$H$. Then $E$ is short by 
Lemma~\ref{hfh}.  
Now use that $H(\R)= E(\R)$ and $\Gamma_{E(\R)}=\Gamma_E\neq\{0\}$ by 
 [ADH, 10.5.15 and remark preceding 4.6.16].
\end{proof}

\noindent
Elaborating the proof of  [ADH, 11.5.15] we now show:

\begin{lemma}\label{ksl} Let $K$ be a short $H$-field and $L$ a Schwarz closed $H$-field extension of $K$. Then there exists a
 short $\upo$-free $H$-subfield of $L$ containing $K$.
\end{lemma}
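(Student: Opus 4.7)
\medskip\noindent
\emph{Plan.} The plan is to execute the construction in [ADH, 11.5.15] of an $\upo$-free $H$-subfield of $L$ containing $K$, but organized so that only countably many elements are adjoined at each stage. Lemma~\ref{kl} will then keep shortness under control throughout the construction.

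Specifically, I would build an ascending sequence $K=L_0\subseteq L_1\subseteq L_2\subseteq\cdots$ of short $H$-subfields of $L$ as follows. The base case $L_0:=K$ is short by hypothesis. Given a short $L_n$, if $L_n$ is already $\upo$-free, set $L_{n+1}:=L_n$. Otherwise, use the Schwarz closedness of $L$ to select countably many elements of $L$ whose adjunction to $L_n$ addresses the obstructions to $\upo$-freeness currently witnessed in $L_n$, and let $L_{n+1}$ be the $H$-subfield of $L$ generated over $L_n$ by these elements. Then $L_{n+1}$ has countable transcendence degree over $L_n$, so by Lemma~\ref{kl} it is short. Finally, set $L_\infty:=\bigcup_n L_n$; this is an $H$-subfield of $L$ containing $K$ of countable transcendence degree over $K$, so it is short by Lemma~\ref{kl} applied once more.

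The hard part will be verifying that $L_\infty$ is in fact $\upo$-free. Any $\upo$-sequence in $L_\infty$ lies in some $L_n$, and one must show that the corresponding obstruction is resolved by stage $n+1$ in such a way that it does not give rise to new unresolved obstructions at later stages. This is the delicate point where the Schwarz closedness of~$L$ must be combined with the strategy of the ADH construction to ensure that the iteration stabilizes after $\omega$ steps, so that the countable-transcendence-degree bookkeeping above suffices to produce a short $\upo$-free $H$-subfield of $L$ containing~$K$.
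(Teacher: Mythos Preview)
Your shortness bookkeeping is fine: any $H$-subfield of $L$ of countable transcendence degree over $K$ is short by Lemma~\ref{kl}, and the paper ultimately relies on the same fact. But what you have written is a plan, not a proof. You explicitly leave the ``hard part'' undone: you do not say what the ``obstructions to $\upo$-freeness'' are, what elements you adjoin to address them, or why the process closes off after $\omega$ steps. Without this, there is nothing to verify.

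The paper's proof does not proceed by an $\omega$-iteration that incrementally repairs obstructions. Instead it performs a short case analysis that, after adjoining at most a couple of specific elements, reduces to the \emph{grounded} case, where [ADH,~11.7.17] already supplies an $\upo$-free extension of countable transcendence degree. The cases are: (i)~$K$ grounded (done by [ADH,~11.7.17]); (ii)~$K$ has a gap $vs$: adjoin an antiderivative of $s$ in $L$ to land in the grounded case; (iii)~$K$ has asymptotic integration, is not $\upl$-free: adjoin $f\in L^\times$ with $f^\dagger=s$ for a suitable gap-creator $s\in K$, landing in case~(ii); (iv)~$K$ is $\upl$-free but not $\upo$-free: here the Schwarz closedness of $L$ is used via the $\HLO$-expansion to decide between adjoining a single element $\upgamma$ (yielding a gap, back to~(ii)) or a single element $\upl$ (yielding a non-$\upl$-free field, back to~(iii)). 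So the Schwarz closedness is invoked once, in a specific place, not diffusely across an iteration. The missing content in your proposal is exactly this case analysis.
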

\begin{proof} If $K$ is grounded, this follows from  [ADH, 11.7.17] and Lemma~\ref{kl}.  Suppose~$K$ is 
ungrounded. Then $K$ has a gap or has asymptotic integration by [ADH, 9.2.16].  If $K$ has a gap $vs$ with $s\in K$, then we take
$y\in L$ with $y\nasymp 1$ and $y'=s$, so that by [ADH, 10.2.1, 10.2.2 and subsequent remarks], $K(y)$ is a grounded $H$-subfield of $L$, and we are back to the grounded case. In general we arrange, by passing to the real closure of $K$ in $L$, that the value group $\Gamma_K$ of the valued field~$K$ is divisible. For such $K$ it remains to consider the case that $K$ has asymptotic integration (and thus rational asymptotic integration) and is not $\upo$-free.  Assume we are in this case. We distinguish two subcases:

\subcase[1]{$K$ is not $\upl$-free.}

\medskip
\noindent Then  [ADH, 11.5.14] yields $s\in K$ creating a gap over $K$, with $S:=\big\{v(s-a^\dagger):\, a\in K^\times\big\}$ a cofinal subset of $\Psi_K^{\downarrow}$ and $s\ne 0$ by [ADH, 11.5.13]. Taking $f\in L^\times$ with~$f^\dagger=s$, $K(f)$ has a gap by [ADH, remark preceding   11.5.15].
Moreover,  $K(f)$ is an $H$-subfield of $L$ by  [ADH, 10.4.5(iv)], so $K(f)$ falls under the ``gap'' case treated earlier. 

\subcase[2]{$K$ is $\upl$-free.} 

\medskip
\noindent As in  [ADH,   11.5, 11.7] we have 
the pc-sequence $(\upo_{\rho})$ in $K$. Now $K$ is not $\upo$-free, so we can take $\upo\in K$ such that 
$\upo_{\rho}\leadsto \upo$.  The Schwarz closed $H$-field $L$ has a unique expansion to a $\HLO$-field $\boldsymbol L$, and we let $\boldsymbol K=(K, I, \Lambda, \Omega)$ be the $\HLO$-field expansion of $K$ such that $\boldsymbol K \subseteq \boldsymbol L$.  We are now in the situation of [ADH, 16.4.6], whose proof
gives two possibilities: 

(a): $\Omega=\omega(K)^{\downarrow}$. As  in the proof of that lemma for {\sc Case 1} this yields a pre-$H$-field extension $K\<\upgamma\>$ of $K$ with a gap that embeds over $K$ into $L$. The residue field of $K\<\upgamma\>$ equals that of $K$, and so $K\<\upgamma\>$ is an $H$-field by [ADH, 9.1.2]. Moreover, $K\<\upgamma\>$ is short by Lemma~\ref{kl}, so $K\<\upgamma\>$ falls under the ``gap''  case  treated earlier. 

(b): $\Omega=K\setminus \sigma\big(\Upg(K)\big){}^{\uparrow}$. As in {\sc Case 2} in the proof of  [ADH,  16.4.6]
this yields an immediate pre-$H$-field extension $K(\upl)$ of $K$ that is not $\upl$-free and that embeds over $K$ into $L$. Then $K(\upl)$ is an $H$-field by  [ADH, 9.1.2] and is short, so $K(\uplambda)$ falls 
under Subcase 1. 
\end{proof}

\subsection*{Finishing the proof} The following is a bit more general than Theorem~A:

\begin{theorem}\label{main, generalized} Let $H$ be a short pre-$H$-field and $E$ a  pre-$H$-field extension of $H$ and $\d$-algebraic over $H$ with short ordered residue field. Then $E$ is short.
\end{theorem}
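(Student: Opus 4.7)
The plan is to reduce to Corollary~\ref{corupo} by embedding $H$ and $E$ in a closed $H$-field $L$ and promoting $H$ to a short, $\upo$-free, real closed $H$-subfield of $L$ whose constant field matches that of $L$. First I would replace $H$ and $E$ by their $H$-field hulls: $\operatorname{H}(H)$ is short by Lemma~\ref{hfh} and embeds into $\operatorname{H}(E)$ by the smallest-extension property; the hull only adjoins constants (each trivially $\d$-algebraic via $Y'=0$), so the residue field is preserved and the extension $\operatorname{H}(E)/\operatorname{H}(H)$ remains $\d$-algebraic. We may therefore assume $H$ and $E$ are $H$-fields; then $C_E$, being isomorphic to the ordered residue field of $E$ by axiom~(H2), is short.

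Next I would embed $E$ in a closed $H$-field extension $L$ of $E$ that is $\d$-algebraic over $E$, so in particular $L$ is real closed and Schwarz closed. Since $L/E$ is $\d$-algebraic, $C_L$ is algebraic over $C_E$, and since $L$ is real closed, $C_L$ is the real closure of $C_E$, hence short. Set $H^{(1)}:=H(C_L)\subseteq L$. This is an algebraic extension of $H$ with residue field $C_L$; by Lemmas~\ref{lem:5.18}(i) and~\ref{shorto} it is short. Since $C_{H^{(1)}}=C_L$ agrees with the constant field of the ambient $L$, the axioms (H1), (H2) for $H^{(1)}$ are inherited from $L$, so $H^{(1)}$ is an $H$-subfield of $L$, and $E$ is still $\d$-algebraic over $H^{(1)}$.

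Then I would apply Lemma~\ref{ksl} to $H^{(1)}\subseteq L$ to obtain a short $\upo$-free $H$-subfield $H^{(2)}$ of $L$ containing $H^{(1)}$. Each step in the proof of Lemma~\ref{ksl} is $\d$-algebraic, so $C_{H^{(2)}}$ is algebraic over $C_{H^{(1)}}=C_L$; as $C_L$ is real closed, $C_{H^{(2)}}=C_L$. Taking the real closure of $H^{(2)}$ inside $L$ then yields a short, $\upo$-free, real closed $H$-subfield $H^{(3)}$ of $L$ with $C_{H^{(3)}}=C_L$. Finally, let $E^{(3)}$ be the differential subfield of $L$ generated by $H^{(3)}\cup E$; by the same constant argument $C_{E^{(3)}}=C_L=C_{H^{(3)}}$, so $E^{(3)}$ is an $H$-subfield of $L$ that is $\d$-algebraic over $H^{(3)}$. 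Corollary~\ref{corupo} then gives $E^{(3)}$ short, whence $E$ is short.

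The main obstacle is the constant-field bookkeeping: Corollary~\ref{corupo} demands that the base and extension share a constant field, so every enlargement of the base must produce no new constants. The key move is to pass first to $H(C_L)$, because $C_L$ is already real closed, and hence every subsequent $\d$-algebraic extension performed inside $L$, including the application of Lemma~\ref{ksl} and the real closure of $H^{(2)}$, automatically has constant field $C_L$; without this preliminary adjunction, either Lemma~\ref{ksl} or the real closure step would threaten to enlarge the constants and break the hypothesis of Corollary~\ref{corupo}.
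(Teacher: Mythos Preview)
Your approach is essentially the paper's: pass to a closed $H$-field $L\supseteq E$, manufacture inside $L$ a short $\upo$-free real closed $H$-subfield with the right constant field, and finish with Corollary~\ref{corupo}. The only structural difference is that you adjoin $C_L$ to $H$ \emph{before} invoking Lemma~\ref{ksl}, whereas the paper first applies Lemma~\ref{ksl} to obtain $F$ and then forms $F(D)$; either order works, and your final passage to the compositum $E^{(3)}$ could equally well be replaced by applying Corollary~\ref{corupo} directly to $L$.

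Two of your intermediate claims are false as stated, though easily repaired. First, ``$L/E$ $\d$-algebraic $\Rightarrow C_L$ algebraic over $C_E$'' fails: a constant transcendental over $C_E$ already satisfies $Y'=0$, so $\d$-algebraicity alone gives no control over constants. You must instead \emph{choose} $L$ to be a Newton-Liouville closure of a $\HLO$-expansion of $E$, which by [ADH,~16.4.9] has residue field equal to the real closure of $\res(E)$; this is exactly what the paper does. Second, ``$H^{(1)}=H(C_L)$ is an algebraic extension of $H$'' fails for the same reason, since $C_E$ need not be algebraic over $C_H$. The shortness of $H(C_L)$ follows instead from the fact that adjoining constants does not change the value group [ADH,~10.5.15], combined with Lemma~\ref{shorto}---precisely how the paper handles $F(D)$. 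With these two corrections your argument goes through.
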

\begin{proof}  Expand $E$ to a pre-$\HLO$-field 
$\boldsymbol E$. The proof of  [ADH,  16.4.9] yields a Newton-Liouville closure $\boldsymbol L=(L,\dots)$ of $\boldsymbol E$ such that $L$ is $\d$-algebraic over $E$ and thus over~$H$, and
the ordered residue field of $L$ is a real closure of the ordered residue field of $E$. In particular, the ordered constant field $D$ of $L$, being isomorphic to the ordered residue field of $L$, is short.  It is enough to prove that $L$ is short. Let $K$ be the $H$-field hull of $H$ in $L$. Then $K$ is short by Lemma~\ref{hfh}, and
so Lemma~\ref{ksl} yields a short $\upo$-free $H$-subfield $F\supseteq K$ of $L$. Now $F(D)$ is an $H$-subfield of
$L$ and has the same (short) value group as $F$, by  [ADH, 10.5.15], and short constant field~$D$. So $F(D)$ is short by Lemma~\ref{shorto}.
Hence the real closure of $F(D)$ in $L$ is short and $\upo$-free. Thus $L$ is short by Corollary~\ref{corupo}.
\end{proof}

\subsection*{Embeddings into closed $\eta_1$-ordered $H$-fields} Maximal Hardy fields, maximal smooth Hardy fields, and maximal analytic Hardy fields are closed $H$-fields, and are $\eta_1$-ordered by \cite[Theorem~A]{ADHfgh} and \cite[Theorem~A and subsequent remark]{AD}. More generally, 
{\it in this subsection $L$ is a closed $\eta_1$-ordered $H$-field.}\/ Using
Theorem~A from the introduction we generalize \cite[Proposition~7.6]{AD}:

\begin{prop}\label{prop:7.6}
Let $E$ be an $\upo$-free pre-$H$-field and $K$ be a short pre-$H$-field extending~$E$ such that $\res(E)=\res(K)$. Then any embedding
$E\to L$  extends to an embedding  $K\to L$.
\end{prop}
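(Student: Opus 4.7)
The plan is to follow the strategy of the countable case \cite[Proposition~7.6]{AD} via Zorn's lemma, with Theorem~\ref{main, generalized} replacing the countability considerations at the step where one extends the partial embedding by a single element.

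Apply Zorn's lemma to the poset of pairs $(E^*,\iota^*)$, where $E \subseteq E^* \subseteq K$ is a pre-$H$-subfield and $\iota^*\colon E^*\to L$ extends the given embedding. Let $(E^*,\iota^*)$ be maximal. As an ordered subset of the short $K$, the pre-$H$-field $E^*$ is short, and the chain $\res(E)\subseteq\res(E^*)\subseteq\res(K)=\res(E)$ forces $\res(E^*)=\res(E)$. The goal is to show $E^*=K$; supposing toward contradiction that $a \in K\setminus E^*$, we aim to extend $\iota^*$ to the pre-$H$-subfield $E^*\langle a\rangle$ of $K$, contradicting maximality.

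The pre-$H$-field $E^*\langle a\rangle$ is short: when $a$ is $\d$-algebraic over $E^*$ this is immediate from Theorem~\ref{main, generalized} applied to $E^*\langle a\rangle/E^*$, while when $a$ is $\d$-transcendental over $E^*$ it follows since $E^*\langle a\rangle$ is again an ordered subset of the short $K$. Using the embedding and universality results for closed $H$-fields from [ADH, Chapter~16], together with (i)~$\eta_1$-orderedness of $L$ to realize cuts over $\iota^*(E^*)$ of countable cofinality and coinitiality (available by shortness of $E^*$), and (ii)~the matching of residue and constant fields, one constructs $b\in L$ realizing the ``type'' of $a$ over $\iota^*(E^*)$, yielding an extension of $\iota^*$ sending $a\mapsto b$.

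The main obstacle is that $\upo$-freeness of $E$ need not propagate to the Zorn-maximal $E^*$, so some care is required: either one arranges for $\upo$-freeness to persist (e.g.\ by running Zorn among $\upo$-free intermediate pre-$H$-fields, and treating the obstructions to $\upo$-freeness separately as in the case-analysis underlying Lemma~\ref{ksl} and \cite{AD}), or one invokes the more general embedding lemmas from [ADH] that permit non-$\upo$-free intermediate pre-$H$-fields. In either case, the crucial new input beyond the countable case of \cite[Proposition~7.6]{AD} is Theorem~\ref{main, generalized}: it supplies shortness of $E^*\langle a\rangle$ in the $\d$-algebraic step and so justifies the use of $\eta_1$-saturation of $L$ at each application of the embedding machinery.
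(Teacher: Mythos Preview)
Your Zorn-based approach differs from the paper's and carries a genuine gap---the one you yourself flag but do not close. The paper does \emph{not} run Zorn inside~$K$. Instead, after first passing to $\operatorname{H}(\cdot)^{\operatorname{rc}}$ to arrange that $E$ and $K$ are real closed $H$-fields with $C_E=C_K$, it \emph{enlarges}~$K$ to a Newton-Liouville closure $M$ of (a $\HLO$-expansion of) $K$, so that $M\supseteq K$ is closed, $\d$-algebraic over $K$, and has $C_M=C_K$. The role of Theorem~A is precisely to guarantee that this $M$---which is not a subfield of $K$---is still short. One then applies \cite[Proposition~7.6]{AD} directly to the pair $E\subseteq M$ and restricts the resulting embedding $M\to L$ to $K$. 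No intermediate Zorn-maximal object ever appears, so the $\upo$-freeness obstacle never arises: the only base to which embedding results are applied is $E$ itself (or rather $\operatorname{H}(E)^{\operatorname{rc}}$, which remains $\upo$-free by [ADH, 13.6.1]).

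By contrast, your argument needs the Zorn-maximal $E^*$ to be $\upo$-free in order to invoke any of the one-step extension results from [ADH, Chapter~16], and neither of your suggested workarounds is a proof: restricting Zorn to $\upo$-free intermediate fields does not obviously give a poset with upper bounds for chains, nor does it explain why a maximal $\upo$-free $E^*$ must equal $K$; and there are no ``more general embedding lemmas'' in [ADH] that dispense with $\upo$-freeness in the way you need. Note also that your stated use of Theorem~\ref{main, generalized}---to deduce shortness of $E^*\langle a\rangle$---is superfluous, since $E^*\langle a\rangle\subseteq K$ is automatically short as an ordered subset of the short $K$; the theorem's genuine role here is the one above, applied to an extension that escapes~$K$.
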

\begin{proof}
By Lemma~\ref{hfh} and \cite[remark after Lemma~5.19]{AD}, the  real closure $\operatorname{H}(K)^{\operatorname{rc}}$ of the $H$-field hull of $K$ is short.
Moreover, the $H$-field $\operatorname{H}(E)^{\operatorname{rc}}$ is $\upo$-free by [ADH, 13.6.1], and each embedding $E\to L$ extends to an embedding $\operatorname{H}(E)^{\operatorname{rc}}\to L$.
The ordered residue field of $\operatorname{H}(E)^{\operatorname{rc}}$ and of~$\operatorname{H}(K)^{\operatorname{rc}}$ is the real closure of
$\res(E)=\res(K)$, cf.~[ADH, remark before 10.3.2].
Hence replacing $K$ by $\operatorname{H}(K)^{\operatorname{rc}}$ and then~$E$ by~$\operatorname{H}(E)^{\operatorname{rc}}$, taken  inside $\operatorname{H}(K)^{\operatorname{rc}}$,
we arrange~$E$,~$K$ to be $H$-fields with real closed constant fields~$C_E=C_K$.

Next expand $K$ to a $\HLO$-field $\mathbf K$, and let $\mathbf M=(M,\dots)$ be 
 a Newton-Liouville closure   of $\boldsymbol K$ such that $M$ is $\d$-algebraic over $K$  and $C_M=C_K$. (See the proof of Theorem~\ref{main, generalized}.)  Then $M$ is closed, and  short by Theorem~A, so by  \cite[Proposition~7.6]{AD} any embedding $E\to L$ extends to an embedding $M\to L$. 
\end{proof}

\noindent
Next a generalization of \cite[Lemma~7.8]{AD}, where we recall that a valued differential field is said to have {\em very small derivation\/} if $\mathcal{O}'\subseteq \smallo$ (with $\mathcal{O}$ and $\smallo$ as usual). 

\begin{lemma}\label{lem:7.8} Suppose $L$ has small derivation 
and $C_L=\R$. Then any short pre-$H$-field with very small derivation and ar\-chi\-me\-dean residue field embeds into $L$.
\end{lemma}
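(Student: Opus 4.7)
The plan is to reduce to an application of Proposition~\ref{prop:7.6}. First, I would replace $K$ by its $H$-field hull $\operatorname{H}(K)$: by Lemma~\ref{hfh} it remains short, its residue field coincides with $\res(K)$ (cf.~[ADH, remarks preceding~10.3.2]) and so is still archimedean, and, since $\operatorname{H}(K)$ is obtained from $K$ by adjoining only new constants $c$ (with $c'=0$ and $v(c)=0$) in order to enforce axiom~(H2), the hypothesis of very small derivation is preserved as well. It therefore suffices to embed $\operatorname{H}(K)$ into $L$, and I may assume $K$ is itself an $H$-field.

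Under (H2) the residue map restricts to an isomorphism $C_K\to\res(K)$ of ordered fields, so~$C_K$ is archimedean. Since $C_L=\R$, fix an ordered-field embedding $\iota\colon C_K\hookrightarrow C_L\subseteq L$. Regarded as a pre-$H$-subfield of $K$ with its (trivial) inherited valuation and derivation, $E:=C_K$ is trivially $\upo$-free (its value group is $\{0\}$), satisfies $\res(E)=C_K=\res(K)$, and $\iota$ is a pre-$H$-field embedding $E\to L$. Proposition~\ref{prop:7.6} then extends $\iota$ to an embedding $K\to L$, as desired.

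The least routine ingredient is the claim in the reduction that $\operatorname{H}(K)$ still has very small derivation. This should follow from the construction in~[ADH, p.~445]: a pre-$H$-field is already pre-$d$-valued, so the passage to the hull only adjoins constants $c_i$ with $c_i'=0$ and $v(c_i)=0$; the Leibniz rule then yields $\mathcal{O}_{\operatorname{H}(K)}'\subseteq\smallo_{\operatorname{H}(K)}$. If this reduction does not go through as smoothly as hoped, one can instead try to construct an $\upo$-free pre-$H$-subfield $E\subseteq K$ with $\res(E)=\res(K)$ directly inside $K$, bypassing the passage through the $H$-field hull.
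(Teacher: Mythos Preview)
Your argument has a genuine gap at the crucial step: the constant field $C_K$, viewed as a trivially valued differential subfield, is \emph{not} $\upo$-free. In [ADH,~11.7] $\upo$-freeness entails asymptotic integration, and in particular a nontrivial value group; with $\Gamma=\{0\}$ there are no elements $b\succ 1$, so the defining condition fails for every $a$. Hence Proposition~\ref{prop:7.6} cannot be invoked with $E=C_K$. Nor can this be repaired by looking for some other $\upo$-free $E$ inside $K$: a general short pre-$H$-field (for instance one that is grounded, or one with a gap) contains no $\upo$-free pre-$H$-subfield at all, so there is simply no admissible ``base'' $E\subseteq K$ to which Proposition~\ref{prop:7.6} applies.

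The paper's proof goes in the opposite direction: instead of shrinking to an $\upo$-free subfield, it \emph{enlarges}. After passing to $\operatorname{H}(E)^{\operatorname{rc}}$ (yielding a real closed $H$-field with small derivation and archimedean constant field), Theorem~A produces a closed short $H$-field extension with the same constant field; then \cite[Lemma~7.8]{AD}, which handles precisely the closed case, gives the embedding into $L$. A secondary issue: your description of the $H$-field hull as ``adjoining only constants with $v(c)=0$'' is not accurate---by [ADH,~10.3.2] the value group may genuinely grow (with countable $\Q$-rank over $\Gamma_K$), which is why the paper only claims \emph{small} derivation for $\operatorname{H}(E)^{\operatorname{rc}}$, not very small.
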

\begin{proof} Let $E$ be a short pre-$H$-field with very small derivation and ar\-chi\-me\-dean residue field. To embed $E$ into $L$ we pass to $\operatorname{H}(E)^{\operatorname{rc}}$ to arrange that $E$ is a real closed $H$-field with small derivation. Now Theorem~A  yields a closed short $H$-field extension $K$ of $E$ with $C_E=C_K$.
Then \cite[Lemma~7.8]{AD} yields an embedding of $K$ (and thus of $E$) into $L$.
\end{proof}

\noindent
By \cite[Proposition~13.11]{ADH4} the universal part of the theory of Hardy fields, viewed as structures
in the language (specified there) of ordered valued differential fields, is the theory of pre-$H$-fields with very small derivation.
The following complements this result and includes Corollary~B from the introduction:

\begin{cor}\label{cor:7.8}
Any short pre-$H$-field with very small derivation and archimedean residue field embeds into every maximal Hardy field. Likewise with ``maximal'' replaced by  ``maximal smooth'', respectively
``maximal analytic''. 
\end{cor}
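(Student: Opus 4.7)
The plan is to reduce immediately to Lemma~\ref{lem:7.8}. To apply that lemma, I need to verify that every maximal Hardy field $L$ (and likewise every maximal smooth, respectively maximal analytic, Hardy field) is a closed $\eta_1$-ordered $H$-field with small derivation and constant field $C_L=\R$.

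First, I would invoke the discussion preceding Proposition~\ref{prop:7.6}: by \cite[Theorem~A]{ADHfgh} and \cite[Theorem~A and the subsequent remark]{AD}, each of the three flavors of maximal Hardy field is a closed $H$-field and is $\eta_1$-ordered. Second, I would note that Hardy fields have small derivation (as recorded in the introduction), so in particular $L$ does. Third, I would check $C_L=\R$: the constant field of a Hardy field is always contained in $\R$, and $\R\subseteq L$ by a maximality argument---given any $r\in\R$, the germ of the constant function $r$ is analytic (a fortiori smooth, a fortiori $C^0$), and forms a Hardy field extension $L(r)$ of $L$ of the same flavor as $L$, so maximality forces $r\in L$.

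With these three facts in hand, Lemma~\ref{lem:7.8} applies directly to $L$, yielding an embedding into $L$ of any short pre-$H$-field with very small derivation and archimedean residue field. This gives all three assertions at once.

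I do not expect any substantive obstacle here: every ingredient is either recorded earlier in the paper or is a standard fact about Hardy fields. The only minor point requiring care is the verification that maximal (not necessarily smooth or analytic) Hardy fields contain~$\R$, but this follows from the same $H(\R)$-adjunction argument that is implicit in the introduction for the analytic case.
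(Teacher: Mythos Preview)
Your proposal is correct and follows exactly the paper's approach: the paper simply records that the corollary ``is a consequence of Lemma~\ref{lem:7.8}'', and you have correctly spelled out the verifications needed to apply that lemma to each flavor of maximal Hardy field. All the ingredients you invoke are indeed established earlier in the paper or are standard.
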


\noindent
This is a consequence of Lemma~\ref{lem:7.8}. We finish this section with an application to the $H$-field 
$\No(\omega_1)$ of surreal numbers of countable length equipped with the Berarducci-Mantova derivation~\cite{BM}. 
(Note, however,  that the Continuum Hypothesis gives {\em isomorphism} results~\cite{AD, ADHfgh} stronger than
Corollary~\ref{38}.)

\begin{cor}\label{38}
Let $M$ be a maximal Hardy field. Then $\No(\omega_1)$ embeds  into $M$.
Likewise with   ``maximal smooth'' and ``maximal analytic'' in place of ``maximal''. 
\end{cor}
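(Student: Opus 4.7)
My plan is to build the embedding $\No(\omega_1)\hookrightarrow M$ by transfinite induction along an $\omega_1$-indexed continuous chain of short closed $H$-subfields of $\No(\omega_1)$, applying Corollary~\ref{cor:7.8} at the base and Proposition~\ref{prop:7.6} at each successor stage. First, by Berarducci--Mantova~\cite{BM} the $H$-field $\No(\omega_1)$ is closed, has constant field $\R$, and has small derivation; in particular it is real closed, $\upo$-free, and has archimedean residue field $\R$. Note that $\No(\omega_1)$ is not itself short: the ordinals $\alpha<\omega_1$ form a cofinal subset of order type $\omega_1$, so Corollary~\ref{cor:7.8} does not apply to $\No(\omega_1)$ directly.

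The heart of the argument is to exhibit a continuous increasing chain $(K_\alpha)_{\alpha<\omega_1}$ of short closed $H$-subfields of $\No(\omega_1)$, each containing $\R$ with $\R$ as its constant field, whose union is $\No(\omega_1)$. Inductively: fix an $\omega_1$-enumeration of $\No(\omega_1)$ compatible with its $\omega_1$-cofinality; at a successor stage enlarge $K_\alpha$ by adjoining the next element from the enumeration and iteratively taking $H$-field hulls, real closures, and Newton--Liouville closures inside $\No(\omega_1)$. Each such closure preserves shortness---by Lemma~\ref{hfh}, Lemma~\ref{kl}, and Corollary~\ref{corupo} respectively---and remains inside $\No(\omega_1)$ because $\No(\omega_1)$ is itself closed. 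At a countable limit stage $\lambda<\omega_1$, take the union, which is again short (countable unions of short sets being short), and close it off as before.

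Given the chain, I construct compatible embeddings $j_\alpha\colon K_\alpha\to M$ by transfinite recursion. The base $j_0$ exists by Corollary~\ref{cor:7.8} applied to the short pre-$H$-field $K_0$ (which has very small derivation, its constant field being $\R$, and archimedean residue field). At a successor stage, $K_\alpha$ is $\upo$-free (being closed), $K_{\alpha+1}$ is a short extension of it with $\res(K_{\alpha+1})=\res(K_\alpha)=\R$, and so Proposition~\ref{prop:7.6} extends $j_\alpha$ to $j_{\alpha+1}\colon K_{\alpha+1}\to M$. At a countable limit take $j_\alpha=\bigcup_{\beta<\alpha}j_\beta$. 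The desired embedding is then $j=\bigcup_{\alpha<\omega_1}j_\alpha$. The ``maximal smooth'' and ``maximal analytic'' variants follow by the same argument, since each target is a closed $\eta_1$-ordered $H$-field with constant field $\R$ to which both Corollary~\ref{cor:7.8} and Proposition~\ref{prop:7.6} apply.

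The main obstacle is step~2, the covering claim $\bigcup_{\alpha<\omega_1}K_\alpha=\No(\omega_1)$: since $|\No(\omega_1)|=2^{\aleph_0}$ may strictly exceed $\aleph_1$, the individual $K_\alpha$ cannot in general be assumed countable and must be allowed to grow to cardinality $2^{\aleph_0}$ (which is compatible with shortness, as $\R$ itself already illustrates). Arranging the $\omega_1$-many stages to cumulatively exhaust $\No(\omega_1)$ while keeping each $K_\alpha$ short is the delicate part; it exploits the $\omega_1$-cofinality of the ordered set $\No(\omega_1)$ together with the differentially algebraic nature of the closure operations guaranteed by Theorem~A.
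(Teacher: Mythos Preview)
Your overall strategy---transfinite recursion along an $\omega_1$-chain of short $\upo$-free $H$-subfields, using Corollary~\ref{cor:7.8} at the base and Proposition~\ref{prop:7.6} at successors---is exactly the paper's. The gap is the chain itself. You write ``fix an $\omega_1$-enumeration of $\No(\omega_1)$'' and then, in the final paragraph, concede that no such enumeration need exist when $2^{\aleph_0}>\aleph_1$; but you never replace the broken construction with a working one. Adjoining one new element per successor stage and closing off produces, after $\omega_1$ stages, the closure of a set of $\aleph_1$ elements over $\R$; there is no reason this should exhaust $\No(\omega_1)$ without further structural input. Invoking ``$\omega_1$-cofinality'' does not help: cofinality controls the order type of unbounded sets, not the cardinality of the whole field, and the closure operations you cite (real closure, Newton--Liouville closure) are cardinality-preserving, so they cannot absorb the missing elements for you.

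The paper sidesteps this entirely by quoting a ready-made decomposition: \cite[Remark after Corollary~4.5]{ADH2} exhibits $\No(\omega_1)$ as the increasing union of \emph{specific} grounded short $H$-subfields $K_\varepsilon$, indexed by the countable $\varepsilon$-numbers. Setting $E_\alpha:=\bigcup_{\varepsilon<\varepsilon_\alpha} K_\varepsilon$ gives the required $\omega_1$-chain of short $\upo$-free $H$-subfields with union $\No(\omega_1)$, with no cardinality bookkeeping needed. (Two minor points: that $\No(\omega_1)$ is closed is from \cite{ADH2}, not \cite{BM}; and your appeal to Corollary~\ref{corupo} for the Newton--Liouville closure presupposes $\upo$-freeness, which you have not yet secured at that point---the paper's $E_\alpha$ are only $\upo$-free, not closed, and that suffices for Proposition~\ref{prop:7.6}.)
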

\begin{proof} The $H$-field $\No(\omega_1)$ is exhibited in \cite[Remark after Corollary 4.5]{ADH2} as the increasing union of grounded  and short $H$-subfields
$K_{\varepsilon}$, with $\varepsilon$ ranging over the countable $\varepsilon$-numbers.
Let $\alpha$, $\beta$ range over countable infinite limit ordinals, and set 
$$E_{\alpha}\ :=\ \bigcup_{\varepsilon<\varepsilon_{\alpha}} K_{\varepsilon}.$$ 
Then $E_{\alpha}$ is a short $H$-subfield of $\No(\omega_1)$ with constant field $\R$, and is $\upo$-free by~[ADH, 11.7.15].  Thus $E_{\omega}$ embeds into $M$ by Corollary~\ref{cor:7.8}. Transfinite recursion on $\alpha$
using Proposition~\ref{prop:7.6} then yields for all $\alpha$
an embedding  $h_\alpha\colon  E_\alpha\to M$ such that~$h_\alpha=h_\beta|_{E_\alpha}$ for $\alpha\le\beta$.
It follows that the common extension of the $h_\alpha$ embeds~$\No(\omega_1)$ into $M$ as required.
\end{proof}

\section{Constructing Hardy Fields with Given Asymptotic Couple}\label{sec:asc}

\noindent
In this section $\k$ is an ordered field, and we consider $H$-couples $(\Gamma,\psi)$ over $\k$ as defined in 
\cite{ADH3}. 
We revisit a construction 
from \cite[Section~11]{ADlc}\endnote{In \cite{ADcl,ADlc} an $H$-couple~$(\Gamma,\psi)$ satisfies additional requirements:  $\psi(\gamma)=\gamma$ for some $\gamma>0$ in~$\Gamma$, and~$(\Gamma, \psi)$ is of Hahn type.} which associates to each $H$-couple~$(\Gamma,\psi)$ of Hahn type over~$\k$
 satisfying (+) a spherically complete $H$-field  with constant field $\k$ 
and $H$-couple $(\Gamma,\psi)$ over $\k$, and closed under powers. Here (+) is the condition that~${\psi(\gamma)=\gamma}$ for some $\gamma>0$ in $\Gamma$, and that $\Gamma$ admits a valuation basis for its $\k$-valuation. 
In the first subsection we carry out this construction without assuming (+):
Corollary~\ref{cor:realizing H-cples}. 
This leads to Corollary~C from the introduction. 
The construction involves equipping suitable Hahn fields with the ``right''  derivation. Other explorations of derivations on Hahn fields are in \cite{KM,Schm}.

\subsection*{Generalizing \cite[Section~11]{ADlc}} 
Let $\Gamma$ be an ordered vector space over $\k$ and suppose it is a spherically complete Hahn space over~$\k$; see [ADH,  2.4] for the definitions. 
Using [ADH, 2.3.2, 2.4.23] we identify~$\Gamma$ with the Hahn product $H[I,\k]$
where~$I:=\big(\text{$[\Gamma^{\ne}]_{\k}$  with  reversed ordering}\big)$, and we let~$i$,~$j$  range over $I$.
The elements of $\Gamma$ are thus the $\gamma=(\gamma_i)\in \k^I$ whose support~$\supp \gamma =  \{ i: \gamma_i\ne 0 \}$
is a well-ordered subset of~$I$. 
Let  $e_i=(e_{ij})\in\Gamma$ be given by~$e_{ij}=0$ if $i\ne j$ and~$e_{ii}=1$.
Then~$e_i>0$ for each $i$, and
the map~$i\mapsto [e_i]_{\k}\colon I\to [\Gamma^{\ne}]_{\k}$ is decreasing and bijective (so $\{e_i:i\in I\}$ is valuation-independent with
respect to the $\k$-valuation on the ordered $\k$-vector space $\Gamma$).
We think of each~$\gamma=(\gamma_i)\in \Gamma$ as an infinite sum
$$\gamma\  =\  \sum_i \gamma_i e_i.$$ 
Let $\alpha$, $\beta$, $\gamma$ range over $\Gamma$.
We say that {\bf $e_i$ occurs in~$\gamma$} if $i\in\supp\gamma$. 
Thus the set of ``Hahn basis elements'' $e_i$ that occur in $\gamma$ is reverse well-ordered,
and for~$\gamma\ne 0$ and $i_0=\min\supp\gamma$ (so
$e_{i_0}$ is  the largest Hahn basis element occurring in $\gamma$),
we have~$[\gamma]_{\k} = [e_{i_0}]_{\k}$, and the equivalence  $\gamma>0\Leftrightarrow \gamma_{i_0}>0$. Note:
$i=[e_i]_{\k}$. 

Let also $\psi\colon \Gamma^{\ne}\to\Gamma$ be such that $(\Gamma,\psi)$  is an $H$-couple over~$\k$,
and assume it is of Hahn type,
that is, for all~$\alpha,\beta\ne 0$ with $\psi(\alpha)=\psi(\beta)$ there is a $c\in\k^\times$
with~$\psi(\alpha-c\beta)>\psi(\alpha)$;
see \cite{ADH3} or~\cite[Section~8]{ADHfgh}, also for the fact that then
$$\psi(\alpha)\le \psi(\beta)\ \Longleftrightarrow\  [\alpha]_{\k}\ge[\beta]_{\k}\qquad(\alpha,\beta\ne 0),$$
and thus $i<j\Leftrightarrow \psi(e_i)<\psi(e_j)$.  Let $t^\Gamma$ be a multiplicative copy of the (additive) ordered abelian group $\Gamma$, ordered such that
$\gamma\mapsto t^\gamma\colon\Gamma\to t^\Gamma$ is an order-reversing isomorphism, and consider the valued ordered Hahn field
$K:=\k(\!(t^\Gamma)\!)$ over $\k$, cf.~[ADH, 3.1, 3.5]. Its elements are the formal series
$$f=\sum_\gamma f_\gamma t^\gamma \qquad (f_\gamma\in\k)$$
whose support
$\supp f =  \{ \gamma: f_\gamma\ne 0  \}$
is a well-ordered subset of $\Gamma$. 
Recall   that a family $(f_\lambda)_{\lambda\in\Lambda}$ of elements of $K$ is said to be {\it summable}\/ if
$\bigcup_\lambda \supp f_\lambda$ is well-ordered and for each $\gamma$ there are only finitely many
$\lambda\in\Lambda$ such that~$f_{\lambda,\gamma}\ne 0$; in this case we define~$\sum_\lambda f_\lambda$ to be the
series $f\in K$ with $f_\gamma=\sum_\lambda f_{\lambda,\gamma}$ for each~$\gamma$~[ADH, p.~712].
Also recall from [ADH, p.~713] that a map $\Phi\colon K\to K$ is said to be {\it strongly additive}\/
if for every summable family $(f_\lambda)$ in $K$ the family
$\big(\Phi(f_\lambda)\big)$ is summable and $$\Phi\left(\sum_\lambda f_\lambda\right)\ =\ \sum_\lambda\Phi(f_\lambda).$$

\begin{lemma}\label{lem:summable}
Let $S\subseteq\Gamma$ be well-ordered. Then
\begin{enumerate}
\item[(i)] for each $\gamma$ there are only finitely many $\alpha\in S$ such that $\gamma=\alpha+\psi(e_i)$ for some $e_i$ occurring in $\alpha$;
\item[(ii)] the set of all $\alpha+\psi(e_i)$ with $\alpha\in S$ and $e_i$ occurring in $\alpha$ is well-ordered.
\end{enumerate}
\end{lemma}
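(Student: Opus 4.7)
The plan is to prove (i) and (ii) together by a single contradiction argument. The three key ingredients are: (a) the map $\theta\colon\Gamma^{>0}\to\Gamma$ defined by $\theta(\alpha):=\alpha+\psi(\alpha)$ is strictly increasing; (b) the Hahn-type identity $\psi(\gamma)=\psi(e_{i_0(\gamma)})$ for $\gamma\ne 0$, where $i_0(\gamma):=\min\supp\gamma$; and (c) the inequality $e_i>\psi(e_j)-\psi(e_i)$ whenever $j<_I i$ in $I$, obtained by applying (A3) to $\alpha=e_i>0$ and $\beta=e_j$.

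For (a): take $0<\alpha<\alpha'$ in $\Gamma$. If $[\alpha]_{\k}=[\alpha']_{\k}$ then $\psi(\alpha)=\psi(\alpha')$ by Hahn type, so $\theta(\alpha')-\theta(\alpha)=\alpha'-\alpha>0$. If $[\alpha]_{\k}<[\alpha']_{\k}$ then $[\alpha'-\alpha]_{\k}=[\alpha']_{\k}$, so $\psi(\alpha'-\alpha)=\psi(\alpha')$ by Hahn type, and applying (A3) to $\alpha'-\alpha>0$ with $\beta=\alpha$ gives $(\alpha'-\alpha)+\psi(\alpha')>\psi(\alpha)$, equivalent to $\theta(\alpha')>\theta(\alpha)$.

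For (ii), suppose toward a contradiction that $(\alpha_n+\psi(e_{i_n}))_n$ is infinite and strictly decreasing with $\alpha_n\in S$, $i_n\in\supp\alpha_n$. I would pass to a subsequence so that simultaneously the $\alpha_n$ are strictly increasing and positive with strictly increasing archimedean class $[\alpha_n]_{\k}$ and, writing $j_n:=i_0(\alpha_n)$, so that both $i_n$ and $j_n$ are strictly decreasing in $I$. Well-orderedness of $S$ gives a weakly increasing subsequence. If this were constant on a sub-subsequence, the $\psi(e_{i_n})$ would be strictly decreasing there, forcing the $i_n$ to form an infinite descending sequence in the fixed well-ordered set $\supp\alpha\subseteq I$, absurd. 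Hence we get $\alpha_n$ strictly increasing; then $\psi(e_{i_n})-\psi(e_{i_{n+1}})>\alpha_{n+1}-\alpha_n>0$ makes $i_n$ strictly decreasing in $I$. As $i_n\in\supp\alpha_n$ gives $[\alpha_n]_{\k}\ge[e_{i_n}]_{\k}$ with the right-hand side strictly increasing, $[\alpha_n]_{\k}$ is cofinally growing, and after further subsequence choices we may assume $\alpha_n>0$ and both $[\alpha_n]_{\k}$ and $j_n$ monotone as claimed.

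The main calculation is then short. Applying (A3) to $\alpha_{n+1}-\alpha_n>0$ with $\beta=\alpha_n$, and using Hahn type to write $\psi(\alpha_{n+1}-\alpha_n)=\psi(e_{j_{n+1}})$ (valid since $[\alpha_{n+1}-\alpha_n]_{\k}=[\alpha_{n+1}]_{\k}=[e_{j_{n+1}}]_{\k}$), gives $\alpha_{n+1}-\alpha_n>\psi(e_{j_n})-\psi(e_{j_{n+1}})$. If $j_n=i_n$ for all $n$, then $\alpha_n+\psi(e_{i_n})=\theta(\alpha_n)$ is strictly increasing by (a), contradicting strict decrease. Otherwise $j_{n+1}<_I i_{n+1}$ strictly for some $n$, so $[e_{j_{n+1}}]_{\k}>[e_{i_{n+1}}]_{\k}$, whence $[\alpha_{n+1}-\alpha_n]_{\k}>[e_{i_{n+1}}]_{\k}$ forces $\alpha_{n+1}-\alpha_n>e_{i_{n+1}}$ in $\Gamma$. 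Combined with (c) in the form $e_{i_{n+1}}>\psi(e_{i_n})-\psi(e_{i_{n+1}})$, this contradicts the inequality $\psi(e_{i_n})-\psi(e_{i_{n+1}})>\alpha_{n+1}-\alpha_n$ derived from strict decrease. Part (i) follows from exactly the same argument with $\alpha_n+\psi(e_{i_n})=\gamma$ constant (so the latter inequality becomes equality); the final contradiction persists. The main obstacle is the subsequence bookkeeping in the previous paragraph; once the monotonicity is arranged, the archimedean-class comparison closes the proof in a few lines.
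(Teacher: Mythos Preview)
Your subsequence bookkeeping contains a fatal error: a subsequence with $[\alpha_n]_{\k}$ strictly increasing cannot exist. Once you have arranged $\alpha_n$ strictly increasing and $\alpha_n+\psi(e_{i_n})$ strictly decreasing, you get
\[
0\ <\ \alpha_{n+1}-\alpha_n\ <\ \psi(e_{i_n})-\psi(e_{i_{n+1}}),
\]
hence $i_{n+1}<i_n$ in $I$ and therefore $[e_{i_n}-e_{i_{n+1}}]_{\k}=[e_{i_{n+1}}]_{\k}$. Combining this with the basic $H$-couple inequality $\big[\psi(e_{i_n})-\psi(e_{i_{n+1}})\big]_{\k}<[e_{i_n}-e_{i_{n+1}}]_{\k}$ and with $[e_{i_{n+1}}]_{\k}\le[\alpha_{n+1}]_{\k}$ (since $i_{n+1}\in\supp\alpha_{n+1}$) gives $[\alpha_{n+1}-\alpha_n]_{\k}<[\alpha_{n+1}]_{\k}$, which forces $[\alpha_n]_{\k}=[\alpha_{n+1}]_{\k}$. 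So $[\alpha_n]_{\k}$ is \emph{constant}; in particular your step $[\alpha_{n+1}-\alpha_n]_{\k}=[\alpha_{n+1}]_{\k}$ is false, and the crucial inequality $\alpha_{n+1}-\alpha_n>e_{i_{n+1}}$ never holds---indeed the opposite archimedean comparison is what the display above yields. With $j_n$ constant and $i_n$ strictly decreasing, your ``$j_n=i_n$ for all $n$'' branch is also vacuous, so neither case of your dichotomy produces a contradiction. (The claim that one may arrange all $\alpha_n>0$ is separately unjustified: $S$ could lie entirely in $\Gamma^{<0}$.)

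The paper's proof exploits precisely the inequality $[\alpha_{n+1}-\alpha_0]_{\k}<[e_{i_{n+1}}]_{\k}$ obtained above (with $\alpha_0$ in place of $\alpha_n$), but reads it differently: it says $\alpha_{n+1}$ and $\alpha_0$ agree at index $i_{n+1}$, so $e_{i_{n+1}}$ occurs in $\alpha_0$. Thus all $e_{i_n}$ occur in the single element $\alpha_0$, and since $(e_{i_n})$ is strictly increasing this contradicts the reverse well-ordering of the $e_i$ occurring in $\alpha_0$. Part~(i) is handled by the same idea.
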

\begin{proof}
For (i), suppose $\gamma=\alpha+\psi(e_i)=\beta+\psi(e_j)$ for elements $\alpha<\beta$ in $S$, with~$e_i$,~$e_j$ occurring in $\alpha$, $\beta$, respectively.
Then $\psi(e_i)-\psi(e_j) = \beta-\alpha > 0$, so~$[e_i]_{\k} < [e_j]_{\k}$ and $[\beta-\alpha]_{\k} = \big[ \psi(e_i)-\psi(e_j) \big]_{\k} < [e_i-e_j]_{\k} = [e_j]_{\k}$. Hence $e_j$ occurs in $\alpha$. Thus if we have a strictly increasing sequence $(\alpha_n)$ in $S$ and
a sequence~$(i_n)$ in $I$ such that  $e_{i_n}$ occurs in $\alpha_n$ and $\alpha_n + \psi(e_{i_n}) = \alpha_{n+1} + \psi(e_{i_{n+1}})$, for all $n$, then all $e_{i_n}$ occur in $\alpha_0$ and $(e_{i_n})$ is strictly increasing, contradicting that the set of $e_i$ occurring in $\alpha_0$ is reverse well-ordered.

For (ii), suppose towards a contradiction   that $(i_n)$ is a sequence in $I$ and $(\alpha_n)$ is a sequence in $S$ such that
$e_{i_n}$ occurs in $\alpha_n$ and $\alpha_n+\psi(e_{i_n}) > \alpha_{n+1}+\psi(e_{i_{n+1}})$ for all $n$.
Passing to a subsequence and using the well-orderedness of $S$  we arrange that $\alpha_n \le \alpha_{n+1}$ for all $n$.
Then
$0 \le \alpha_{n+1}-\alpha_0 < \psi(e_{i_0})-\psi(e_{i_{n+1}})$, so
$$[\alpha_{n+1}-\alpha_0]_{\k}\ \le\ \big[ \psi(e_{i_0})-\psi(e_{i_{n+1}}) \big]_{\k}\ <\  [e_{i_0}-e_{i_{n+1}}]_{\k}\ =\  [e_{i_{n+1}}]_{\k}.$$
 Thus all $e_{i_n}$ occur in $\alpha_0$.
Also $\psi(e_{i_{n+1}}) < \psi(e_{i_n}) + (\alpha_n-\alpha_{n+1}) \le \psi(e_{i_n})$ and so~$e_{i_{n+1}} > e_{i_n}$ for all $n$,  contradicting that the set of $e_i$ occurring in $\alpha_0$ is  reverse well-ordered.
\end{proof}

\noindent
For $\alpha=\sum_i \alpha_i e_i$ ($\alpha_i\in\k$) the set of $e_i$ with $\alpha_i\ne 0$ is reverse well-ordered, so
the subset  $\big\{ \alpha+\psi(e_i): \text{$e_i$ occurs in $\alpha$}\big\}$ of $\Gamma$ is well-ordered and is the support of~$(t^\alpha)' := -\sum_i \alpha_i\, t^{\alpha+\psi(e_i)}\in K$.
Let $f=\sum_{\alpha}f_{\alpha}t^\alpha$ range over $K$. Then the family~$\big(f_\alpha (t^\alpha)'\big)$ is summable by Lemma~\ref{lem:summable}, and we put
$$f'\  :=\  \sum_\alpha f_\alpha (t^\alpha)'\in K \quad(\text{so $f'=0$ for $f\in \k$}). $$
Using $(t^{\alpha+\beta})'=-\sum_i (\alpha_i+\beta_i) t^{\alpha+\beta+\psi(e_i)} = (t^\alpha)' t^\beta+ t^\alpha (t^\beta)'$ and  \cite[Corollary~3.9]{vdH:noeth}
 one verifies that the map $f\mapsto f'\colon K\to K$ is a strongly additive $\k$-linear derivation on~$K$. 
 For $\alpha\ne 0$ we set $\alpha':=\alpha+\psi(\alpha)$. 

\begin{lemma}\label{lem:leading term}
If $0\ne f \prec 1$, then 
$$f'\sim - \alpha_{i} f_\alpha t^{\alpha'}\qquad\text{where $\alpha:=vf$, $i:=[\alpha]_{\k}$.}$$
Hence if $0\ne f\nasymp 1$, then  $v(f') = (vf)'$.
\end{lemma}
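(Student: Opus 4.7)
The plan is to expand $f'$ as a double sum of Hahn monomials and extract its leading (minimum-exponent) term explicitly. For each $\beta\in S:=\supp f$, writing $\beta=\sum_j \beta_j e_j$, the definition of the derivation gives
\[
f'\ =\ -\sum_{\beta\in S}\ \sum_{e_j\text{ occurring in }\beta} f_\beta\,\beta_j\, t^{\beta+\psi(e_j)},
\]
a summable family by Lemma~\ref{lem:summable}. Both assertions reduce to showing that the minimum of $\supp f'$ is $\alpha'=\alpha+\psi(\alpha)$ and that the coefficient of $t^{\alpha'}$ in $f'$ is $-\alpha_i f_\alpha\ne 0$, where $i=[\alpha]_{\k}$.

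I would first show that for each nonzero $\beta\in\Gamma$ the set $\{\beta+\psi(e_j):e_j\text{ occurs in }\beta\}$ has unique minimum $\beta+\psi(\beta)=\beta'$, attained at the single index $j=[\beta]_{\k}$. The largest Hahn basis element $e_{j_0}$ occurring in $\beta$ satisfies $[e_{j_0}]_{\k}=[\beta]_{\k}$, so by the Hahn-type equivalence recalled in the text, $\psi(e_{j_0})=\psi(\beta)$; for any strictly smaller $e_j$ occurring in $\beta$ we have $[e_j]_{\k}<[\beta]_{\k}$, hence $\psi(e_j)>\psi(\beta)$. Thus the minimum exponent coming from the $\beta$-slice is $\beta'$, with coefficient $-f_\beta\,\beta_{[\beta]_{\k}}\ne 0$. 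It then suffices to show $\beta'>\alpha'$ whenever $\beta\in S\setminus\{0\}$ satisfies $\beta>\alpha$ (the $\beta=0$ slice contributes nothing since $(t^0)'=0$).

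This last comparison is the strict monotonicity of $\gamma\mapsto\gamma+\psi(\gamma)$ on $\Gamma^{\ne}$, which I would derive directly from (A3): applied to $\beta-\alpha>0$ it yields $(\beta-\alpha)+\psi(\beta-\alpha)>\psi(\alpha)$, and a short case split handles the archimedean classes involved — using $[\beta-\alpha]_{\k}\le\max([\alpha]_{\k},[\beta]_{\k})$ one gets $\psi(\beta-\alpha)\le\psi(\beta)$ in the generic case (turning the displayed inequality into $\beta+\psi(\beta)>\alpha+\psi(\alpha)$), while cancellation $[\beta-\alpha]_{\k}<[\beta]_{\k}$ forces $[\alpha]_{\k}=[\beta]_{\k}$, whence $\psi(\alpha)=\psi(\beta)$ and $\beta'>\alpha'$ reduces to $\beta>\alpha$. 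Combining everything, $\alpha'$ is the unique minimum of $\supp f'$ with coefficient $-\alpha_i f_\alpha\ne 0$, giving $f'\sim-\alpha_i f_\alpha t^{\alpha'}$ and $v(f')=\alpha'=(vf)'$. For the ``hence'' clause the remaining case $f\succ 1$ requires no new work, since none of the above uses the sign of $\alpha$, only $\alpha,\beta\in\Gamma^{\ne}$; alternatively one could reduce to $f\prec 1$ via $1/f$ and $\psi(-\gamma)=\psi(\gamma)$ from (A2). The main (but mild) obstacle is the strict monotonicity of $\gamma\mapsto\gamma+\psi(\gamma)$, but this is a purely axiomatic fact about $H$-couples independent of the Hahn-field construction.
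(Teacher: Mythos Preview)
Your proof is correct and follows essentially the same line as the paper's: first identify the leading term of each $(t^\beta)'$ as $-\beta_{[\beta]_{\k}}t^{\beta'}$, then use strict monotonicity of $\gamma\mapsto\gamma'$ to conclude that the leading term of $f'$ comes from $\beta=\alpha$. The paper simply asserts the monotonicity (it is standard in [ADH]) and handles the case $f\succ 1$ by passing to $1/f$, whereas you supply a direct proof of monotonicity from (A3) and note that this argument already covers both signs of $\alpha$; these are differences of detail only, not of approach.
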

\begin{proof}
 For $\alpha\ne 0$ and $i:=[\alpha]_{\k}$ we have $i=[e_{i}]_{\k}$,
 hence $(t^\alpha)' \sim -\alpha_{i}t^{\alpha+\psi(e_{i})} = -\alpha_{i} t^{\alpha'}$.
If 
 $0<\alpha<\beta$, then $\alpha'<\beta'$ and thus
 $(t^\alpha)' \succ (t^\beta)'$. This yields the first implication.
 For the second, let $0\ne f\nasymp 1$, arrange~$f \prec 1$ by replacing $f$ with $1/f$ if necessary, and use the first part of the lemma.
 \end{proof}

\begin{lemma}\label{lem:der}
The derivation $f\mapsto f'$   makes $K$ into an $H$-field with constant field~$\k$ and
asymptotic couple $(\Gamma,\psi)$.
\end{lemma}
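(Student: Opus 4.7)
The plan is to read everything from Lemma~\ref{lem:leading term} together with the Hahn-field ordering and valuation on $K$.

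First I would identify the constant field as $\k$. Clearly $f'=0$ for $f\in\k$. Conversely, let $f\in K\setminus\k$: if $f\not\asymp 1$, then $v(f')=(vf)'\in\Gamma$ by Lemma~\ref{lem:leading term}, so $f'\ne 0$; if instead $f\asymp 1$ and $f\notin\k$, then $f=f_0+g$ with $f_0\in\k$ and $0\ne g\prec 1$, so $f'=g'\ne 0$ by the same lemma applied to $g$. Axiom (H2) is immediate from the Hahn-field structure: every $f\in\O=\{h:vh\ge 0\}$ splits uniquely as its constant term $f_0\in\k$ plus an infinitesimal remainder in $\smallo$.

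For (H1), suppose $h>\k$. Then $\alpha:=vh<0$ (otherwise $h$ lies within $\smallo$ of a constant and is bounded above by $\k$), and since $h-c$ shares its leading term with $h$ for every $c\in\k$, the leading coefficient $h_\alpha\in\k^\times$ must be positive. Writing $\alpha=\sum_j\alpha_j e_j$ and setting $i_0:=\min\supp\alpha$, the sign condition $\alpha<0$ in the Hahn space $\Gamma$ forces $\alpha_{i_0}<0$, and $i_0=[\alpha]_\k$. Lemma~\ref{lem:leading term} then yields $h'\sim -\alpha_{i_0}h_\alpha\,t^{\alpha'}$, whose leading coefficient $-\alpha_{i_0}h_\alpha$ is positive; hence $h'>0$, establishing (H1).

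Finally, the value group of $K$ is $\Gamma$ by construction, and for $h\in K^\times$ with $vh\ne 0$, Lemma~\ref{lem:leading term} gives $v(h^\dagger)=v(h')-vh=\psi(vh)$, so the asymptotic couple of $K$ coincides with $(\Gamma,\psi)$. The main delicate point is the sign bookkeeping in (H1): one must translate the negativity of $\alpha\in\Gamma=H[I,\k]$ into negativity of its leading Hahn coefficient $\alpha_{i_0}$, and then combine this with the minus sign in the formula $(t^\alpha)'\sim -\alpha_{i_0}t^{\alpha'}$ to conclude positivity of the leading coefficient of $h'$.
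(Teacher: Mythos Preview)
Your proof is essentially correct and follows the same strategy as the paper, but there is one citation issue in the verification of (H1). You invoke Lemma~\ref{lem:leading term} to obtain $h'\sim -\alpha_{i_0}h_\alpha\,t^{\alpha'}$ for $h>\k$; however, the asymptotic formula in that lemma is stated only under the hypothesis $0\ne f\prec 1$, whereas your $h$ satisfies $h\succ 1$. The second clause of the lemma does give $v(h')=(vh)'$ for $h\nasymp 1$, but identifying the \emph{leading coefficient} of $h'$ when $h\succ 1$ requires a short extra argument (for instance, write $h=h_\alpha t^\alpha(1+\varepsilon)$ with $\varepsilon\prec 1$ and compute $h^\dagger$, or check directly that $\beta+\psi(e_j)>\alpha'$ for every other pair $(\beta,j)$ occurring in the double sum for $h'$).

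The paper sidesteps this by verifying the equivalent condition $0<f\prec 1\Rightarrow f'<0$ instead of (H1) as you stated it: for such $f$ one has $\alpha=vf>0$, hence $\alpha_i>0$ for $i=[\alpha]_\k$, and $f_\alpha>0$; then Lemma~\ref{lem:leading term} applies directly to give $f'\sim -\alpha_i f_\alpha t^{\alpha'}<0$. This is the same sign bookkeeping you describe, just carried out on $1/h$ rather than $h$, so that the lemma's hypothesis is met without further work.
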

\begin{proof}
Let $f\in K\setminus\k$. Then $f'\ne 0$:  arranging $0\ne f\nasymp1$ by subtracting~${f_0\in \k}$ from~$f$ in case
$f\asymp 1$, we have $v(f')=(vf)'$ by Lemma~\ref{lem:leading term}, in particular, ${f'\ne 0}$. Thus the constant field   of our derivation is~$\k$.
The valuation ring $\mathcal O$ of~$K$ is the convex
hull of~$\k$ in $K$, and $\mathcal O=\k+\smallo$.
Finally, suppose $0<f\prec 1$. Then~${\alpha:=vf>0}$, so $\alpha_{i}>0$ for $i:=[\alpha]_{\k}$. Also $f_\alpha>0$
and $f'\sim - \alpha_{i} f_\alpha t^{\alpha'}$ and thus $f'<0$.
\end{proof}

\noindent
An $H$-field $H$ with constant field $C$ is said to be {\it closed under powers}\/ if $H^\dagger$
is a $C$-linear subspace of $H$. In that case,
 the asymptotic couple of  $H$   is an $H$-couple over $C$ of Hahn type
in a natural way, by \cite[Lemma~7.4, Proposition~7.5]{ADlc}. See  \cite[Sections~7,~8]{ADlc} for  other basic facts about $H$-fields
closed under powers. 

\begin{prop}\label{prop:ADlc}
The $H$-field $K$ is closed under powers, and its associated $H$-couple over $\k$ is $(\Gamma,\psi)$.
\end{prop}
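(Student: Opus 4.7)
The plan is to verify separately that (a) the $H$-couple of $K$ over its constant field $\k$ (from Lemma~\ref{lem:der}) is $(\Gamma,\psi)$, and (b) $K^\dagger$ is a $\k$-linear subspace of $K$. Both parts hinge on the decomposition of an arbitrary $f\in K^\times$ as $f=f_\alpha t^\alpha(1+\epsilon)$ with $\alpha:=vf$, $f_\alpha\in\k^\times$ and $\epsilon\in\smallo$, which (using $f_\alpha^\dagger=0$) gives
$f^\dagger=(t^\alpha)^\dagger+(1+\epsilon)^\dagger$ with $(t^\alpha)^\dagger=-\sum_i\alpha_i t^{\psi(e_i)}$.

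For~(a), since $vK^\times=\Gamma$ by construction, I need only check that $v(f^\dagger)=\psi(\alpha)$ whenever $\alpha=vf\ne 0$. Setting $i_0:=\min\supp\alpha$, I would first use $[\alpha]_{\k}=[e_{i_0}]_{\k}$ together with the Hahn type hypothesis (which gives $\psi(\alpha)=\psi(e_{i_0})$) and the fact that $i\mapsto\psi(e_i)$ is order-preserving, to conclude $v\big((t^\alpha)^\dagger\big)=\psi(e_{i_0})=\psi(\alpha)$. When $\epsilon\ne 0$, Lemma~\ref{lem:leading term} applied to $\epsilon$ yields $v(\epsilon')=v\epsilon+\psi(v\epsilon)$, and then axiom~(A3) with $v\epsilon>0$ forces $v\big((1+\epsilon)^\dagger\big)=v\epsilon+\psi(v\epsilon)>\psi(\alpha)$; the second summand is dominated by the first, giving $v(f^\dagger)=\psi(\alpha)$.

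For~(b), the decomposition also shows $K^\dagger=L+(1+\smallo)^\dagger$ with $L:=\{(t^\alpha)^\dagger:\alpha\in\Gamma\}$. The map $\alpha\mapsto(t^\alpha)^\dagger$ is visibly $\k$-linear in the coefficients $\alpha_i$, so $L$ is a $\k$-linear subspace. To handle the second summand, for $\epsilon\in\smallo$ and $c\in\k$ I would put
\[
(1+\epsilon)^c\ :=\ \sum_{n\ge 0}\binom{c}{n}\epsilon^n\ \in\ 1+\smallo,
\]
the family being summable (since $\epsilon\prec 1$) by the standard Hahn-field machinery underlying \cite[Corollary~3.9]{vdH:noeth}. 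Differentiating term by term using strong additivity of the derivation and collecting factors in the familiar way then yields $\big((1+\epsilon)^c\big)^\dagger=c(1+\epsilon)^\dagger$, which shows $(1+\smallo)^\dagger$ is closed under multiplication by each $c\in\k$. Hence $K^\dagger$ is $\k$-linear, i.e., $K$ is closed under powers. Since $\big((t^\alpha)^c\big)^\dagger=c(t^\alpha)^\dagger=(t^{c\alpha})^\dagger$ forces $(t^\alpha)^c\in\k^\times t^{c\alpha}$, the induced $\k$-action on the value group agrees with the original $\k$-scalar multiplication on $\Gamma$.

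The main obstacle will be the binomial identity $\big((1+\epsilon)^c\big)^\dagger=c(1+\epsilon)^\dagger$: this demands some care with summability of $(\epsilon^n)_{n\ge 0}$ (routine from $v\epsilon>0$) and with strong additivity of $f\mapsto f'$ when differentiating the binomial series termwise. This is precisely the computation underlying \cite[Section~11]{ADlc}, carried out here without needing the extra hypothesis~(+).
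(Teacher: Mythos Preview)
Your proposal is correct and follows essentially the same route as the paper. The paper's proof simply cites Lemma~\ref{lem:der}, strong additivity, and the identity $(t^{c\alpha})^\dagger=c(t^\alpha)^\dagger$, then defers to \cite[Proposition~11.4]{ADlc}; your argument unpacks exactly this, using that identity (your ``$\alpha\mapsto(t^\alpha)^\dagger$ is $\k$-linear'') together with the binomial series $(1+\epsilon)^c$ and strong additivity to handle the $(1+\smallo)^\dagger$ part. One minor redundancy: your part~(a) re-derives that the asymptotic couple of $K$ is $(\Gamma,\psi)$, but this is already the content of Lemma~\ref{lem:der}, so only the compatibility of the induced $\k$-action on $\Gamma$ (which you check at the end) is genuinely new here.
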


\noindent
This follows from Lemma~\ref{lem:der} using the strong additivity of the derivation $f\mapsto f'$ of $K$
and the fact that $(t^{c\alpha})^\dagger=c(t^\alpha)^\dagger$ for all $\alpha$ and all $c\in\k$, just like in \cite{ADlc}, Proposition~11.4 followed
from Lemmas~11.1, 11.2.

\begin{remarkNumbered}\label{rem:restrict}
Let $\Gamma_0$ be a subgroup of $\Gamma$ with $\Psi=\psi(\Gamma^{\ne})\subseteq\Gamma_0$.
Then~$K_0:=\k(\!(t^{\Gamma_0})\!)$ is an $H$-subfield of $K$ with constant field $\k$ and
asymptotic couple $(\Gamma_0,\psi|_{\Gamma_0^{\ne}})$.
If~$\Gamma_0$ is a $\k$-linear subspace of $\Gamma$, then $K_0$ is closed under powers, and
its associated $H$-couple over $\k$ is $(\Gamma_0,\psi|_{\Gamma_0^{\ne}})$.
\end{remarkNumbered}  

\noindent
The construction above generalizes that of the derivation defined in \cite{DHK} on the 
$H$-field of logarithmic hyperseries:

\begin{example}
With $\k=\R$, let $I$ be an ordinal and  $\Gamma:=H[I,\R]=\R^I$. Let $i$ range over~$I$ and take~${\psi\colon\Gamma^{\ne}\to\Gamma}$ to be constant on archimedean classes with~$\psi(e_i)=\sum_{j\le i} e_j$ for all $i$. Then~$(\Gamma,\psi)$ is an $H$-couple over $\R$ of Hahn type.
Note that $\psi(e_0)=e_0$ is the smallest element of $\Psi=\psi(\Gamma^{\ne})$, and~$(\Gamma,\psi)$  has gap~$\sum_i e_i$.
The construction above yields the $H$-field $K:=\R(\!(t^\Gamma)\!)$ with constant field $\R$. This $K$ is closed under powers and has associated $H$-couple $(\Gamma,\psi)$ over $\R$.  Changing notation, let $I$ be the ordinal $\alpha$ and
$\mathbb L_{<\alpha}=\R[[\mathfrak L_{<\alpha}]]$ the $H$-field of  logarithmic hyperseries defined in
\cite[Sections 1, 3]{DHK}. Then the unique strongly additive field isomorphism $h\colon K\to \mathbb L_{<\alpha}$ over~$\R$ with~$h(t^\gamma)=\prod_i \ell_i^{-\gamma_i}$ for $\gamma=\sum_i \gamma_i e_i$ is an isomorphism of $H$-fields.  
\end{example}

\noindent
So far the ordered vector space $\Gamma$ over $\k$ was a spherically complete Hahn space over $\k$ and $(\Gamma, \psi)$ an $H$-couple over $\k$ of Hahn type. We now relax this:  {\em for  the rest of this subsection $(\Gamma,\psi)$ is an arbitrary
$H$-couple over $\k$.}\/
We shall need a variant of \cite[Lemma~3.2]{ADcl}:

\begin{lemma}\label{lem:3.2} 
Let   $\Gamma_1$ be an ordered vector space over $\k$
extending $\Gamma$ with~$[\Gamma]_{\k}=[\Gamma_1]_{\k}$. Then there is a unique map~$\psi_1\colon\Gamma_1^{\ne}\to\Gamma_1$
such that $(\Gamma_1,\psi_1)$ is an $H$-couple over $\k$ extending $(\Gamma,\psi)$.
If $(\Gamma,\psi)$ is of Hahn type and $\Gamma_1$ is a Hahn space over $\k$, then $(\Gamma_1,\psi_1)$ is of Hahn type.
\end{lemma}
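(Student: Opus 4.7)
The plan is to build $\psi_1$ as the pullback of $\psi$ along the identification $[\Gamma^{\ne}]_{\k}=[\Gamma_1^{\ne}]_{\k}$. The essential fact, which can be extracted from \cite{ADH3} or derived directly, is that for any $H$-couple $(\Gamma,\psi)$ over $\k$ the function $\psi$ is constant on each $\k$-archimedean class of $\Gamma^{\ne}$: in the Hahn type case this is immediate from the equivalence $\psi(\alpha)\le\psi(\beta)\Leftrightarrow[\alpha]_{\k}\ge[\beta]_{\k}$, while in general it follows from (A3) applied to $\k$-scalar multiples $(1/c)\alpha$ ($c\in\k^{>}$) together with $\k^{\times}$-invariance. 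Hence $\psi$ descends to a weakly decreasing map $\bar\psi\colon[\Gamma^{\ne}]_{\k}\to\Gamma$, and I set $\psi_1(\alpha_1):=\bar\psi([\alpha_1]_{\k})\in\Gamma\subseteq\Gamma_1$ for $\alpha_1\in\Gamma_1^{\ne}$. Uniqueness is then automatic: any $\psi_1$ making $(\Gamma_1,\psi_1)$ into an $H$-couple over $\k$ extending $(\Gamma,\psi)$ is, by the same fact applied inside $(\Gamma_1,\psi_1)$, constant on $\k$-archimedean classes and extends $\psi$, hence must agree with this formula.

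Next I would verify the $H$-couple axioms for $(\Gamma_1,\psi_1)$. The $\k^{\times}$-invariance $\psi_1(c\alpha_1)=\psi_1(\alpha_1)$ is immediate from $[c\alpha_1]_{\k}=[\alpha_1]_{\k}$. For (A1), given $\alpha_1+\beta_1\ne 0$, the inequality $[\alpha_1+\beta_1]_{\k}\le\max([\alpha_1]_{\k},[\beta_1]_{\k})$ combined with weak monotonicity of $\bar\psi$ yields $\psi_1(\alpha_1+\beta_1)\ge\min(\psi_1(\alpha_1),\psi_1(\beta_1))$. The key step is (A3): given $\alpha_1>0$ in $\Gamma_1$ and $\beta_1\in\Gamma_1^{\ne}$, I pick $\alpha\in\Gamma^{>}$ with $[\alpha]_{\k}=[\alpha_1]_{\k}$ and a scalar $c\in\k^{>}$ with $c\alpha\le\alpha_1$ (available since $\alpha_1$ and $\alpha$ are $\k$-commensurable), together with $\beta\in\Gamma^{\ne}$ with $[\beta]_{\k}=[\beta_1]_{\k}$. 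Then $\psi_1(\alpha_1)=\psi(c\alpha)$ and $\psi_1(\beta_1)=\psi(\beta)$, so $\alpha_1+\psi_1(\alpha_1)\ge c\alpha+\psi(c\alpha)>\psi(\beta)=\psi_1(\beta_1)$, the strict step being (A3) applied inside $(\Gamma,\psi)$.

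For the final clause, assume $(\Gamma,\psi)$ is of Hahn type and $\Gamma_1$ is a Hahn space over $\k$. Given $\alpha_1,\beta_1\in\Gamma_1^{\ne}$ with $\psi_1(\alpha_1)=\psi_1(\beta_1)$, picking representatives $\alpha,\beta\in\Gamma^{\ne}$ in the same $\k$-archimedean classes gives $\psi(\alpha)=\psi(\beta)$, and the Hahn type equivalence $\psi(\alpha)=\psi(\beta)\Leftrightarrow[\alpha]_{\k}=[\beta]_{\k}$ forces $[\alpha_1]_{\k}=[\beta_1]_{\k}$. The Hahn space property of $\Gamma_1$ then yields $c\in\k^{\times}$ with either $\alpha_1=c\beta_1$ or $[\alpha_1-c\beta_1]_{\k}<[\alpha_1]_{\k}$; in the latter case strict monotonicity of $\bar\psi$ (a consequence of Hahn type) gives $\psi_1(\alpha_1-c\beta_1)>\psi_1(\alpha_1)$, confirming Hahn type for $(\Gamma_1,\psi_1)$. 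I expect the main obstacle to be the constancy-on-$\k$-archimedean-classes property of $\psi$ in the non-Hahn-type case; once that is in hand, everything else is routine.
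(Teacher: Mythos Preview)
Your approach is correct and is essentially the paper's: define $\psi_1$ via the bijection $[\Gamma^{\ne}]_\k=[\Gamma_1^{\ne}]_\k$ and verify the axioms. On your flagged worry: the constancy of $\psi$ on $\k$-archimedean classes is immediate once you use that an $H$-couple over~$\k$ is of $H$-type, so $\psi$ is weakly decreasing on~$\Gamma^{>}$; combined with $\k^\times$-invariance this gives, for $c_1\beta\le\alpha\le c_2\beta$ with $c_1,c_2\in\k^{>}$, the chain $\psi(\beta)=\psi(c_2\beta)\le\psi(\alpha)\le\psi(c_1\beta)=\psi(\beta)$. Your (A3)-based sketch by itself only yields $[\psi(\alpha)-\psi(\beta)]_\k<[\alpha]_\k$, which is not enough. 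For the verification of (A3) in $(\Gamma_1,\psi_1)$ the paper's one-line proof points instead to the inequality $[\psi(\alpha)-\psi(\beta)]_\k<[\alpha-\beta]_\k$ from~\cite{ADH3}; your sandwiching argument via $c\alpha\le\alpha_1$ is an equally valid route.
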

\begin{proof}
To verify axiom (A3) of asymptotic couples, use that for distinct $\alpha,\beta\ne0$ we have $\big[\psi(\alpha)-\psi(\beta)\big]_{\k}<[\alpha-\beta]_{\k}$, by \cite[p.~536]{ADH3}.
\end{proof}

\noindent
Now assume $(\Gamma,\psi)$ is of Hahn type.
By the Hahn Embedding Theorem for Hahn spaces [ADH, 2.4.23] we may view $\Gamma$ as an ordered $\k$-linear subspace of the ordered vector space
$\hat\Gamma:=H[I,\k]$ over $\k$, where~$I:=\big([\Gamma^{\ne}] \text{ with reversed ordering}\big)$.    Lemma~\ref{lem:3.2} yields a unique
map~${\hat\psi\colon \hat\Gamma^{\ne}\to\hat\Gamma}$ making $(\hat\Gamma,\hat\psi)$ an $H$-couple over $\k$   
  extending $(\Gamma,\psi)$. Then  $(\hat\Gamma,\hat\psi)$ is   of Hahn type.
Let~$\hat K:=\k(\!(t^{\hat \Gamma})\!)$ be the $H$-field with constant field $\k$ and $H$-couple~$(\hat\Gamma,\hat\psi)$
over~$\k$ and  closed under powers that was constructed above, with $\hat K$,   $(\hat\Gamma,\hat\psi)$ in the roles of~$K$,~$(\Gamma,\psi)$, respectively. 
Then $\hat K$ has the $H$-subfield~$K:=\k(\!(t^\Gamma)\!)$ which is closed under powers and has 
$(\Gamma,\psi)$ as its $H$-couple  over $\k$,
by Remark~\ref{rem:restrict}.
This shows:

\begin{cor}\label{cor:realizing H-cples}
Every $H$-couple over $\k$ of Hahn type  is the $H$-couple of a spherically complete $H$-field with constant field $\k$ and
closed under powers.
\end{cor}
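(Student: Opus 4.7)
The plan is to reduce to the already-handled case in which the underlying ordered vector space of the $H$-couple is itself a spherically complete Hahn space over $\k$. First I would apply the Hahn Embedding Theorem for Hahn spaces [ADH, 2.4.23] to realize $\Gamma$ as an ordered $\k$-linear subspace of $\hat\Gamma := H[I,\k]$, where $I := [\Gamma^{\ne}]_{\k}$ taken with the reverse ordering. By construction $\hat\Gamma$ is a spherically complete Hahn space over $\k$ and $[\hat\Gamma]_{\k} = [\Gamma]_{\k}$. Lemma~\ref{lem:3.2} then supplies a unique map $\hat\psi\colon\hat\Gamma^{\ne}\to\hat\Gamma$ extending $\psi$ for which $(\hat\Gamma,\hat\psi)$ is an $H$-couple over $\k$, and its second assertion ensures this extension is again of Hahn type.

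Next I would feed $(\hat\Gamma,\hat\psi)$ into the construction developed in the preceding pages. Proposition~\ref{prop:ADlc} applied to $(\hat\Gamma,\hat\psi)$ produces the Hahn field $\hat K := \k(\!(t^{\hat\Gamma})\!)$, equipped with the strongly additive $\k$-linear derivation $f\mapsto f'$ of Lemma~\ref{lem:der}, as an $H$-field with constant field $\k$ and associated $H$-couple $(\hat\Gamma,\hat\psi)$, and closed under powers. Since $\Gamma$ is a $\k$-linear subspace of $\hat\Gamma$ containing $\Psi = \psi(\Gamma^{\ne})$, Remark~\ref{rem:restrict} yields the $H$-subfield $K := \k(\!(t^\Gamma)\!)$ of $\hat K$ with constant field $\k$, associated $H$-couple $(\Gamma,\psi)$ over $\k$, and closed under powers. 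As a Hahn field over a well-ordered-support series index, this $K$ is spherically complete, giving the desired realization.

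I do not anticipate any serious obstacle here once the preceding machinery is in place. The two points that need verification are that Lemma~\ref{lem:3.2} genuinely applies in the extension step (i.e.\ that $(\hat\Gamma,\hat\psi)$ remains of Hahn type, which is exactly its second clause, available because $\hat\Gamma$ is a Hahn space), and that the hypothesis $\Psi\subseteq\Gamma$ required by Remark~\ref{rem:restrict} is met, which is automatic from $\Gamma$ being the value group of the given $H$-couple. The genuine substantive work has already been done in defining and analysing the derivation on Hahn fields indexed by spherically complete Hahn spaces; the corollary merely removes the spherical-completeness assumption on $\Gamma$ by the standard device of embedding into its Hahn completion.
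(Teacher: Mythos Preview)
Your proposal is correct and follows essentially the same approach as the paper: embed $\Gamma$ into the Hahn product $\hat\Gamma=H[I,\k]$ via the Hahn Embedding Theorem, extend $\psi$ to $\hat\psi$ using Lemma~\ref{lem:3.2}, build $\hat K=\k(\!(t^{\hat\Gamma})\!)$ via Proposition~\ref{prop:ADlc}, and then restrict to $K=\k(\!(t^\Gamma)\!)$ using Remark~\ref{rem:restrict}. You are even slightly more explicit than the paper in noting that the hypothesis of Remark~\ref{rem:restrict} requires $\hat\Psi\subseteq\Gamma$ (which holds since $\hat\psi$ has the same image as $\psi$, being constant on $\k$-archimedean classes and $[\hat\Gamma]_{\k}=[\Gamma]_{\k}$), and in recording that $K$ is spherically complete as a Hahn field.
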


\subsection*{Proof of Corollary~C}
 {\it In this subsection $L$ is a closed $\eta_1$-ordered $H$-field with small derivation and constant field $\R$.}\/ 
Every ordered vector space over $\R$ is a Hahn space over $\R$,
so an $H$-couple $(\Gamma,\psi)$ over $\R$ is of Hahn type iff 
it is of Hardy type.
Thus by Lem\-ma~\ref{lem:7.8} and Corollary~\ref{cor:realizing H-cples}:

\begin{lemma}\label{lem:C1}
If $(\Gamma,\psi)$ is a short $H$-couple over $\R$ of Hardy type with small derivation, 
then $(\Gamma,\psi)$ is isomorphic to the $H$-couple over~$\R$ of a spherically complete $H$-subfield of $L$ containing $\R$ and closed under powers.
\end{lemma}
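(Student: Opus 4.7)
The plan is to build a concrete short $H$-field realizing $(\Gamma,\psi)$ by invoking Corollary~\ref{cor:realizing H-cples}, verify it meets the hypotheses of Lemma~\ref{lem:7.8}, and then transfer it inside~$L$. Since $\Gamma$ is an ordered $\R$-vector space, it is automatically a Hahn space over~$\R$, and as remarked just before the lemma, $(\Gamma,\psi)$ being of Hardy type is equivalent (over $\R$) to being of Hahn type. So Corollary~\ref{cor:realizing H-cples} applied with $\k=\R$ yields a spherically complete $H$-field $K=\R(\!(t^\Gamma)\!)$ with constant field $\R$, $H$-couple $(\Gamma,\psi)$ over $\R$, and closed under powers.

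Next I verify that $K$ has very small derivation, that is, $\mathcal O'\subseteq\smallo$: since $C_K=\R$ we have $\mathcal O=\R+\smallo$, hence $\mathcal O'=\smallo'$. For $0\ne f\in\smallo$, necessarily $f\nasymp 1$, so Lemma~\ref{lem:leading term} gives $v(f')=(vf)'=vf+\psi(vf)$. Since $(\Gamma,\psi)$ has small derivation, $vf>0$ implies $vf+\psi(vf)>0$, so $f'\in\smallo$. I next check shortness of $K$: its ordered residue field is $\R$ (archimedean, hence short), and its value group is $\Gamma$, short by hypothesis on $(\Gamma,\psi)$. Lemma~\ref{shorto} then yields that $K$ is short.

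At this stage $K$ is a short pre-$H$-field with very small derivation and archimedean residue field, so Lemma~\ref{lem:7.8} provides an embedding $K\hookrightarrow L$ of ordered valued differential fields. The image $K'$ is an $H$-subfield of $L$ isomorphic to $K$. Its constant field is $\R$, so $\R\subseteq K'$. Spherical completeness is intrinsic to the valued field, and is preserved under the embedding; likewise the $H$-couple of $K'$ over $\R$ is $(\Gamma,\psi)$, and being closed under powers is preserved under $H$-field isomorphisms fixing the constant field. Thus $K'$ has all the stated properties.

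I do not expect any substantial obstacle: the real technical work has already been done in Corollary~\ref{cor:realizing H-cples} (constructing the Hahn field with the correct derivation) and in Lemma~\ref{lem:7.8} (which relies on Theorem~A and on the $\eta_1$-back-and-forth machinery of~\cite{AD}). The steps above are straightforward verifications gluing those two results together.
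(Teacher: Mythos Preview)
Your proposal is correct and follows exactly the approach the paper indicates (the paper's ``proof'' is just the one-line remark that the lemma follows from Lemma~\ref{lem:7.8} and Corollary~\ref{cor:realizing H-cples}). You have simply filled in the routine verifications---very small derivation via $\mathcal O=\R+\smallo$, shortness via Lemma~\ref{shorto}, and transport of the relevant properties through the embedding---that the paper leaves implicit.
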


\noindent
In the same way that Lemma~\ref{lem:7.8} gave rise to Corollary~\ref{cor:7.8},   Lemma~\ref{lem:C1} yields:

\begin{cor}
If $M$ is a maximal Hardy field,
then every short $H$-couple over~$\R$ of Hardy type with small derivation is isomorphic to the
$H$-couple over $\R$ of a spherically complete $H$-subfield of $M$ containing $\R$ and closed under powers.
Likewise with ``maximal smooth'' $($respectively,``maximal analytic''$)$ in place of ``maximal''.
\end{cor}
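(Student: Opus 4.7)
The plan is to reduce this corollary to Lemma~\ref{lem:C1} in exactly the same way that Corollary~\ref{cor:7.8} was deduced from Lemma~\ref{lem:7.8}. The key observation is that a maximal Hardy field (as well as a maximal smooth or maximal analytic Hardy field) $M$ satisfies all the hypotheses placed on the field $L$ in Lemma~\ref{lem:C1}: it is a closed $\eta_1$-ordered $H$-field with small derivation and constant field $\R$. Indeed, these facts are collected in the subsection ``Embeddings into closed $\eta_1$-ordered $H$-fields'': being closed is stated there explicitly, the $\eta_1$-ordering follows from \cite[Theorem~A]{ADHfgh} and \cite[Theorem~A and subsequent remark]{AD}, and smallness of the derivation as well as $C_M=\R$ are standard (every Hardy field has small derivation, and every maximal analytic/smooth/Hardy field contains $\R$ and therefore has constant field $\R$).

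With this in hand, given a short $H$-couple $(\Gamma,\psi)$ over $\R$ of Hardy type with small derivation, I would take $L:=M$ in Lemma~\ref{lem:C1}. That lemma directly produces a spherically complete $H$-subfield of $M$ containing $\R$, closed under powers, whose $H$-couple over $\R$ is isomorphic to $(\Gamma,\psi)$. This is precisely what the corollary asks for, and handles the three cases (maximal, maximal smooth, maximal analytic) uniformly since each falls under the hypotheses of Lemma~\ref{lem:C1}.

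There is essentially no obstacle: the hard work has already been done in Lemma~\ref{lem:C1}, which itself combines the general Hahn-field construction of Corollary~\ref{cor:realizing H-cples} with the embedding result Lemma~\ref{lem:7.8}. The only thing to check carefully is that the three types of maximal Hardy fields all meet the ``closed $\eta_1$-ordered $H$-field with small derivation and constant field $\R$'' conditions, but as noted above these are all recorded earlier in the paper. Thus the proof is little more than a one-line invocation of Lemma~\ref{lem:C1} applied three times (once for each flavor of maximal Hardy field).
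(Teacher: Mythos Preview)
Your proposal is correct and follows exactly the approach indicated by the paper, which simply says ``In the same way that Lemma~\ref{lem:7.8} gave rise to Corollary~\ref{cor:7.8}, Lemma~\ref{lem:C1} yields'' the corollary. Your expanded justification that any maximal (smooth, analytic) Hardy field satisfies the hypotheses on $L$ is precisely what the paper's one-line pointer is alluding to.
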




\noindent
The following is clear from [ADH, 9.8.1].

\begin{lemma}\label{lem:9.8.1}
Let $(\Gamma,\psi)$ be an $H$-asymptotic couple and
  $\Gamma^*$ be an ordered vector space over $\R$ containing $\Gamma$ as an ordered subgroup with $[\Gamma]=[\Gamma^*]$.
Then there is a unique map $\psi^*\colon (\Gamma^*)^{\neq}\to\Gamma^*$ extending $\psi$ which makes $(\Gamma^*,\psi^*)$ an $H$-asymptotic couple. Moreover, $(\Gamma^*,\psi^*)$ is an $H$-couple over $\R$, and $(\Gamma^*,\psi^*)$ is of Hahn type iff~$(\Gamma,\psi)$ is of Hardy type. 
\end{lemma}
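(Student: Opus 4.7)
The plan is to define $\psi^*$ by pulling back through the archimedean class map. The key property, which is [ADH,~9.8.1] (and which is more or less what ``$H$-asymptotic couple'' is designed to encode), is that in every $H$-asymptotic couple the map $\psi$ factors through $\alpha\mapsto[\alpha]$. Combined with the hypothesis $[\Gamma]=[\Gamma^*]$, this forces the definition: given $\alpha^*\in(\Gamma^*)^{\ne}$, pick any $\alpha\in\Gamma^{\ne}$ with $[\alpha]=[\alpha^*]$ and set $\psi^*(\alpha^*):=\psi(\alpha)$. Independence from the choice of $\alpha$ and the uniqueness of an $H$-asymptotic-couple extension of $\psi$ to $(\Gamma^*)^{\ne}$ are both instances of the same class-invariance principle.

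Next I would verify (A1)--(A3) for $(\Gamma^*,\psi^*)$. Each of these axioms constrains $\psi^*$ only through its values, and those values coincide with the $\psi$-values on the corresponding $\Gamma$-representatives in the same archimedean classes, so the axioms transfer from $(\Gamma,\psi)$. The $\R$-linearity of $\Gamma^*$ enters only through the remark that multiplication by $c\in\R^\times$ preserves archimedean class, whence $\psi^*(c\alpha^*)=\psi^*(\alpha^*)$ automatically; thus $(\Gamma^*,\psi^*)$ is an $H$-couple over~$\R$ at no extra cost.

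For the final equivalence, I would invoke the fact that in an ordered $\R$-vector space the $\Z$- and $\R$-archimedean class structures coincide. Hence Hahn type over~$\R$ for $(\Gamma^*,\psi^*)$ is equivalent to $\psi^*$ inducing a strictly decreasing map on the ordered set $[(\Gamma^*)^{\ne}]$, and Hardy type for $(\Gamma,\psi)$ is the analogous condition on $[\Gamma^{\ne}]$. The hypothesis $[\Gamma]=[\Gamma^*]$ identifies these two ordered class sets, and by construction $\psi^*$ and $\psi$ descend to the same map on them, so either strict-monotonicity condition holds iff the other does. I do not anticipate any real obstacle: the entire lemma is a routine translation between $\Z$- and $\R$-archimedean perspectives that is made transparent by [ADH,~9.8.1], which is presumably why the author flags it as ``clear.''
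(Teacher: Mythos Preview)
Your proposal is correct and matches the paper's approach: the paper offers no proof beyond the remark that the lemma is ``clear from [ADH,~9.8.1]'', and you have accurately unpacked what that citation buys---namely, that in an $H$-asymptotic couple $\psi$ factors through $[\,\cdot\,]$, which together with $[\Gamma]=[\Gamma^*]$ forces the definition of $\psi^*$ and makes the axioms and the Hahn/Hardy equivalence routine. Your observation that $\Z$- and $\R$-archimedean classes coincide in an ordered $\R$-vector space is exactly the point the paper uses elsewhere (``Every ordered vector space over $\R$ is a Hahn space over~$\R$'').
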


\begin{lemma}\label{shortsph}
Let    $(\Gamma,\psi)$ be a short asymptotic couple of Hardy type with small derivation.
  Then~$(\Gamma,\psi)$   is isomorphic to the asymp\-to\-tic couple of a spherically complete $H$-subfield of~$L$ containing~$\R$.
\end{lemma}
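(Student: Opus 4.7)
My plan is to reduce the statement to Lemma~\ref{lem:C1} by embedding $(\Gamma,\psi)$ into a short $H$-couple over $\R$ of Hardy type with small derivation, realizing the latter as (the $H$-couple of) a spherically complete Hahn sub-$H$-field of $L$, and then carving out the desired sub-$H$-field using Remark~\ref{rem:restrict}.

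First I would invoke the Hahn embedding theorem for ordered abelian groups to view $\Gamma$ as an ordered subgroup of $\Gamma^*:=H[I,\R]$, where $I:=[\Gamma^{\ne}]$ taken with the reverse ordering; then $\Gamma^*$ is an ordered $\R$-vector space with $[\Gamma]=[\Gamma^*]$. Lemma~\ref{lem:9.8.1} then supplies a unique extension $\psi^*\colon (\Gamma^*)^{\ne}\to\Gamma^*$ of $\psi$ for which $(\Gamma^*,\psi^*)$ is an $H$-couple over $\R$, necessarily of Hahn (equivalently Hardy) type.

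Next I would verify that $(\Gamma^*,\psi^*)$ is short and has small derivation. Shortness is immediate from $[\Gamma^*]=[\Gamma]$ together with \cite[Lemma~5.17]{AD}. For small derivation, given $\alpha>0$ in $\Gamma^*$ pick $\beta>0$ in $\Gamma$ with $[\alpha]=[\beta]$, so that Hardy type gives $\psi^*(\alpha)=\psi(\beta)$. The hard part will be ruling out $[\psi(\beta)]\ge[\beta]$ in the case $\psi(\beta)<0$: in the equality case, (A2) combined with Hardy type forces $\psi(-\psi(\beta))=\psi(\beta)$, and applying small derivation of $(\Gamma,\psi)$ to $-\psi(\beta)\in\Gamma^{>}$ yields $0>0$; and $[\psi(\beta)]>[\beta]$ directly contradicts small derivation of $\beta$. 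Hence $[\psi(\beta)]<[\beta]=[\alpha]$, which forces $-\psi(\beta)<\alpha$ and thus $\alpha+\psi^*(\alpha)>0$.

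Finally, the proof of Lemma~\ref{lem:C1} (via Corollary~\ref{cor:realizing H-cples} and Lemma~\ref{lem:7.8}) produces an $H$-field embedding over $\R$ of the Hahn $H$-field $\R(\!(t^{\Gamma^*})\!)$ into $L$, with image a spherically complete $H$-subfield $K$ containing $\R$. Since $\psi^*$ is constant on archimedean classes, $\Psi^*=\psi(\Gamma^{\ne})\subseteq\Gamma$, so Remark~\ref{rem:restrict} applied with $\Gamma_0=\Gamma$ exhibits $\R(\!(t^\Gamma)\!)\subseteq\R(\!(t^{\Gamma^*})\!)$ as an $H$-subfield with asymptotic couple $(\Gamma,\psi)$. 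Its image $K_0$ inside $K\subseteq L$ is then the required spherically complete $H$-subfield of $L$ containing $\R$ with asymptotic couple $(\Gamma,\psi)$.
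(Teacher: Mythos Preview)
Your proposal is correct and follows essentially the same route as the paper's proof. The only difference is that the paper first restricts to $K=\R(\!(t^\Gamma)\!)$ via Remark~\ref{rem:restrict} and then embeds $K$ (rather than $K^*=\R(\!(t^{\Gamma^*})\!)$) into $L$ using Lemma~\ref{lem:7.8}, so your verification of small derivation for $(\Gamma^*,\psi^*)$ becomes unnecessary, as $K$ inherits small derivation directly from $(\Gamma,\psi)$.
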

\begin{proof} 
The  Hahn product $\Gamma^*:=H[I,\R]$, with
$I\ :=\ \big(\text{$[\Gamma^{\ne}]$  with reversed ordering}\big)$
is an ordered vector space over $\R$; see
 [ADH, p.~98].
The Hahn Embedding Theorem~[ADH, 2.3.4, 2.4.18, 2.4.19] yields an ordered group embedding $\iota\colon \Gamma\to\Gamma^*$ 
such that  $\big[\iota(\Gamma)\big]=[\Gamma^*]$. Identify $\Gamma$ with its image in $\Gamma^*$ via $\iota$.  Lemma~\ref{lem:9.8.1} gives a unique extension
$\psi^*\colon (\Gamma^*)^{\neq}\to\Gamma^*$ of~$\psi$ such that~$(\Gamma^*,\psi^*)$ is an $H$-asymptotic couple.  Then $(\Gamma^*,\psi^*)$  is an $H$-couple~over $\R$
  of Hahn type,  
  and is short by \cite[Lem\-ma~5.16]{AD}. 
Let~$K^*:=\R(\!(t^{\Gamma^*})\!)$ be the $H$-field closed under powers with constant field $\R$ and
  $H$-couple~$(\Gamma^*,\psi^*)$
over~$\R$ constructed in the previous subsection. Then by Remark~\ref{rem:restrict}, 
$K:=\R(\!(t^\Gamma)\!)$ is an $H$-subfield of $K^*$
with constant field $\R$ and asymptotic couple~$(\Gamma,\psi)$ and $K$ has small derivation. 
Since $K$ is short, it embeds into $L$
by Lem\-ma~\ref{lem:7.8}.
\end{proof}

\noindent
Lemma~\ref{shortsph} now yields Corollary~C from the introduction:

\begin{cor}
Let $M$ be a maximal Hardy field.
Then every short asymptotic couple  of Hardy type with small derivation is isomorphic to the
asymptotic couple of a spherically complete $H$-subfield of $M$ containing $\R$.
Likewise with ``maximal smooth'' $($respectively,  ``maximal analytic''$)$ in place of ``maximal''.
\end{cor}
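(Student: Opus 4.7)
The statement is essentially the specialization of Lemma~\ref{shortsph} to the case $L:=M$, so the plan has two steps. First, I would verify that every maximal Hardy field $M$, and analogously every maximal smooth or maximal analytic Hardy field, satisfies the standing hypothesis of this subsection: it is a closed $\eta_1$-ordered $H$-field with small derivation and constant field $\R$. Closure and $\eta_1$-orderedness are recorded at the start of the subsection ``Embeddings into closed $\eta_1$-ordered $H$-fields'', citing \cite[Theorem~A]{ADHfgh} and \cite[Theorem~A and subsequent remark]{AD}; that Hardy fields have small derivation is noted in the introduction; and maximality forces $\R\subseteq M$, so the constant field of $M$ is $\R$.

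Second, I would apply Lemma~\ref{shortsph} with $L:=M$: given a short asymptotic couple $(\Gamma,\psi)$ of Hardy type with small derivation, the lemma produces a spherically complete $H$-subfield of $M$ containing $\R$ whose asymptotic couple is isomorphic to $(\Gamma,\psi)$. The ``likewise'' clause follows from the same argument applied to the two remaining classes of maximal Hardy fields, since each of them satisfies exactly the same four standing hypotheses.

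I do not anticipate any genuine obstacle here, since all of the substantive work has been absorbed into Lemma~\ref{shortsph}: the Hahn-space construction yielding $K=\R(\!(t^\Gamma)\!)$ with the prescribed asymptotic couple and small derivation, together with the embedding $K\hookrightarrow L$ furnished by Lemma~\ref{lem:7.8} (i.e., Corollary~B). Spherical completeness is preserved under the valued-field embedding into $M$, so the image is a spherically complete $H$-subfield of $M$ realizing $(\Gamma,\psi)$ as its asymptotic couple, as required.
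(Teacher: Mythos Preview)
Your proposal is correct and matches the paper's own approach: the paper simply records that Lemma~\ref{shortsph} yields the corollary, leaving implicit exactly the verification you spell out (that maximal, maximal smooth, and maximal analytic Hardy fields are closed $\eta_1$-ordered $H$-fields with small derivation and constant field~$\R$). No additional ingredient is needed.
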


\printendnotes

\newlength\templinewidth
\setlength{\templinewidth}{\textwidth}
\addtolength{\templinewidth}{-2.25em}

\patchcmd{\thebibliography}{\list}{\printremarkbeforebib\list}{}{}

\let\oldaddcontentsline\addcontentsline
\renewcommand{\addcontentsline}[3]{\oldaddcontentsline{toc}{section}{References}}

\def\printremarkbeforebib{\bigskip\hskip1em The citation [ADH] refers to the book \\

\hskip1em\parbox{\templinewidth}{
{\sc M. Aschenbrenner, L. van den Dries, J. van der Hoeven,}
\textit{Asymptotic Differential Algebra and Model Theory of Transseries,} Annals of Mathematics Studies, vol.~195, Princeton University Press, Princeton, NJ, 2017.
}

\bigskip

}

\bibliographystyle{amsplain}

\end{document}